
\documentclass[]{cDSS2e}

\usepackage[active]{srcltx}

\setlength{\unitlength}{1cm}%
\begingroup\makeatletter\ifx\SetFigFont\undefined%
\gdef\SetFigFont#1#2#3#4#5{%
  \reset@font\fontsize{#1}{#2pt}%
  \fontfamily{#3}\fontseries{#4}\fontshape{#5}%
  \selectfont}%
\fi\endgroup%

\begin{document}

\doi{10.1080/1745973YYxxxxxxxx}
 \issn{1745-9745} \issnp{1745-9737}

  \jnum{00} \jyear{2008} \jmonth{January}

\markboth{Taylor \& Francis and I.T. Consultant}{Dynamical Systems}

\title{{\itshape Global dynamics of a family of \\3-D Lotka--Volterra Systems} }

\author{A. C. Murza$^{\rm a}$$^{\ast}$\thanks{$^\ast$Corresponding author. Email: amurza@ifisc.uib-csic.es
\vspace{6pt}} and A. E. Teruel$^{\rm b}$\\\vspace{9pt}  $^{\rm a}${\em{IFISC, Institut de F\'{\i}sica Interdisciplinaria i Sistemes Complexes (CSIC - UIB), Universitat de les Illes Balears, Crta. de Valldemossa km. 7.5, 07122 Palma de Mallorca, Spain.}};\\ 
$^{\rm b}${\em{Departament de Matem\`{a}tiques i Inform\`atica, Universitat de les Illes
Balears, Crta. de Valldemossa km. 7.5, 07122 Palma de Mallorca, Spain.}}\\\vspace{9pt}\received{v1 released July 2009} }

\maketitle

\bigskip
\begin{abstract}

In this paper we analyze the flow of a family of  three dimensional Lotka-Volterra systems restricted to an invariant and bounded region.
The behaviour of the flow in the interior of this region is simple: either every orbit is a periodic orbit or they move from one boundary to another. Nevertheless the complete study of the limit sets in the boundary allows to understand the bifurcations which take place in the region as a global bifurcation that we denote by focus--center--focus bifurcation.

\bigskip

\begin{keywords} Lotka-Volterra system, integrability, first integrals, flow description, limit sets, bifurcation set.
\end{keywords}

\begin{classcode}
34C23;34C30;34C15;37G35
\end{classcode}\bigskip

\end{abstract}

\section{Introduction}

Consider a closed chemical system composed of four coexisting chemical species denoted by $X,Y,Z$ and $V,$ which represent four possible states of a macromolecule operating in a reaction network far from equilibrium. As discussed by Wyman \cite{W75}, such a reaction can be modeled as a ``turning wheel'' of one--step transitions of the macromolecule, which circulate in a closed reaction path involving the four possibles states. The turning wheels have been proposed by Di Cera et al. \cite{DPW88} as a generic model for macromolecular autocatalytic interactions. 

While Di Cera's model considers unidirectional first order interactions, Murza et al. in \cite{MOD20} consider a closed sequence of chemical equilibria. In their approach the reaction rates are defined as functions of the time dependent product concentrations, multiplied by their reaction rate constants. This type of reaction rates has been introduced in Wyman's original paper \cite{W75}. 

Following the closed sequence of chemical equilibria in \cite{MOD20}, the autocatalytic chemical reactions between $X,Y,Z$ and $V$ (see Figure \ref{fig:reaction}) 
\begin{figure}
\begin{center}
\includegraphics{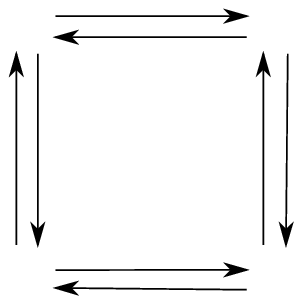}
\begin{picture}(0,0)
\put(-0.45,0){$Z$}
\put(-3,0){$V$}
\put(-0.4,2.65){$Y$}
\put(-3,2.65){$X$}
\put(-1.8,3.1){$k_{11} y$}
\put(-1.8,2.2){$k_{12} x$}
\put(-1.8,0.45){$k_{32} z$}
\put(-1.8,-0.35){$k_{31} v$}
\put(-3.7,1.45){$k_{41}x$}
\put(-2.6,1.45){$k_{42} v$}
\put(-1.2,1.45){$k_{22}y$}
\put(-0,1.45){$k_{21}z$}
\end{picture}
\vspace{0.5cm}
\caption{\label{fig:reaction}Closed sequence of chemical equilibria between $X,Y,Z$ and $V$.}
\end{center}
\end{figure}
are governed by the following 4--parameter family of nonlinear differential equations
\begin{equation}\label{4Dequation}
\begin{array}{l}
\dot{x}=x(k_1 y - k_4 v),\\
\dot{y}=y(k_2 z -k_1 x),\\
\dot{z}=z(k_3 v -k_2 y),\\
\dot{v}=v(k_4 x -k_3 z).
\end{array}
\end{equation}
Functions $x(t),y(t),z(t)$ and $v(t)$ are concentrations at time $t$ of the chemical species $X,Y,Z,V$ respectively. The parameters $k_i=k_{i2}-k_{i1}$ for $i=1,2,3,4$ are differences of pairs of reaction rate constants corresponding to each chemical equilibrium. It can be easily seen that system (\ref{4Dequation}) is identical to Di Cera's model restricted to $n=4,$ see equation (7) in \cite{DPW88}. In that work, Di Cera claims that this family exhibits self sustained and conservative oscillations only when the parameter $\mathbf{k}=(k_1,k_2,k_3,k_4)$ is in the three dimensional manifold $\mathcal{S}=\left\{\mathbf{k}\in \mathbb{R}^4 \setminus \{\mathbf{0}\}: k_1 k_3-k_2 k_4=0\right\}$.

Assuming that the conservation of mass $x+y+z+v=1$ applies to the macromolecular system (\ref{4Dequation}), its kinetic behaviour is described by the three--dimensional system of polynomial differential equations
\begin{equation}\label{3DLV}
\left\{
\begin{array}{l}
\dot{x}=x(k_1 y -k_4 (1-x-y-z)),\\
\dot{y}=y(k_2 z - k_1 x),\\
\dot{z}=z(-k_2 y + k_3 (1-x-y-z)),
\end{array}
\right.
\end{equation}
restricted to the flow--invariant bounded region $\mathcal{T}=\{x \geq 0, y\geq 0, z\geq 0, x+y+z \leq 1\}.$ 

The system (\ref{3DLV}) is a particular case of the class of three--dimensional Lotka--Volterra  systems (LVS)
\[
 \dot{x}_i=x_i\left( a_i + \sum_{i=1}^3 b_{ij}x_j\right),\quad i=1,2,3,
\]
which has been extensively studied starting with the pioneer works of Lotka \cite{L20} and Volterra \cite{V31}. These systems have multiple applications in biochemistry. For instance, enzyme kinetics \cite{W75} , circadian clocks \cite{MH75} and genetic networks \cite{AKJ08,CDL04}  often produce sustained oscillations modeled with LVS.

Solutions of LVS cannot, in general, be written in terms of elementary functions. So that the search for invariant manifolds, first integrals or/and integrability conditions can be useful to the analysis of the flow. This approach has found increasing popularity over the last few years after the works of Christopher and Llibre \cite{C94,CL99,CLS97}, which are based on the Darboux's theory of integrability, see for instance \cite{CL00,C00,LR00} and references therein. Unfortunately for systems of dimension greater than 2 the behaviour of the flow is not entirely known, even when the system is integrable.
Of course in the case of non--integrable LVS the lack of knowledge is higher and other tools are required. Some results concerning the existence of limit cycles for special parameter sets can be found in \cite{DZ98,Z93,Z96,H88}. Additional results about the number of limit cycles which can appear after perturbation are presented in \cite{MZ05}.

In this paper we deal with the global analysis of the flow of system (\ref{3DLV}) restricted to the region $\mathcal{T}.$ Note that the boundary $\partial \mathcal{T}$ of the region is a three dimensional simplex which is invariant by the flow. This boundary is formed by the union of the following invariant subsets: 
the invariant faces 
$\mathcal{X}=\{(0,y,z): y>0,z>0,y+z< 1\},$
$\mathcal{Y}=\{(x,0,z): x>0,z>0,x+z<1\},$
$\mathcal{Z}=\{(x,y,0): x>0,y>0,x+y<1\}$ and
$\Sigma=\{(x,y,z): x>0,y>0,z>0, x+y+z =1\};$
and the invariant edges
$\mathcal{R}_{yz}=\{(x,0,0): 0<x<1\},$
$\mathcal{R}_{xz}=\{(0,y,0): 0\leq y \leq 1\},$
$\mathcal{R}_{xy}=\{(0,0,z): 0<z<1\},$
$\mathcal{R}_{px}=\{(0,y,1-y): 0<y<1\},$ 
$\mathcal{R}_{py}=\{(x,0,1-x): 0\leq x \leq 1\},$ and
$\mathcal{R}_{pz}=\{(x,1-x,0): 0<x<1\}.$
We remark that the edges $\mathcal{R}_{xz}$ and $\mathcal{R}_{py}$ are closed segments formed by singular points. 

In order to make easier the analysis we consider the following subsets in the parameter space: 
$\mathcal{S}^-=\{\mathbf{k}\in \mathbb{R}^4: k_1 k_3-k_2 k_4<0\},$ $\mathcal{S}^+=\{\mathbf{k}\in \mathbb{R}^4: k_1 k_3-k_2 k_4 >0\},$ 
$\mathcal{NZ}=\{\mathbf{k}\in \mathbb{R}^4: k_1 k_2 k_3 k_4 \neq 0\}$ and 
$\mathcal{PS}=\{\mathbf{k}\in \mathbb{R}^4: k_1 k_2 >0, k_1 k_3>0, k_1 k_4 >0\}.$
We note that $\mathcal{S}^-$ and $\mathcal{S}^+$ together with $\mathcal{S}$ (defined above) form a partition of the parameter space $\mathbb{R}^4.$ We also note that the parameter set $\mathcal{PS}$ is a subset of $\mathcal{NZ}.$

The main result of the paper is summarised in the following theorem.

\begin{theorem}\label{A}
\begin{itemize}
\item [(a)] Suppose that $\mathbf{k}\in  \mathcal{PS} \cap \mathcal{S}.$ 
\begin{itemize}
\item [(a-1)] The open segment
\[
R=\left\{\left( \frac {k_3}{k_4}z, \frac {k_4-(k_4 +k_3)z}{k_4+k_1}, z\right): 0<z<\frac {k_4}{k_3+k_4}\right\}
\]
is contained in the interior of $\mathcal{T}$ and every point in $R$ is a singular point.
\item[(a-2)] Let $\mathbf{p}$ be a point contained in the interior of $\mathcal{T}$ but not in $R.$ Then the orbit $\gamma_{\mathbf{p}}$ through the point $\mathbf{p}$ is a periodic orbit.
\item[(a-3)] Each of the two limit sets of every orbit in $\mathcal{Y}\cup \Sigma$ is a singular point contained in the edge $\mathcal{R}_{py}.$ Moreover, given two orbits $\gamma_1 \subset \mathcal{Y}$ and $\gamma_2 \subset \Sigma$ such that $\omega(\gamma_1)=\alpha(\gamma_2),$ then $\omega(\gamma_2) = \alpha(\gamma_1).$
\item[(a-4)] Each of the two limit sets of every orbit in $\mathcal{X}\cup \mathcal{Z}$ is a singular point contained in the edge $\mathcal{R}_{xz}.$ Moreover, given two orbits $\gamma_1 \subset \mathcal{X}$ and $\gamma_2 \subset \mathcal{Z}$ such that $\omega(\gamma_1)=\alpha(\gamma_2),$ then $\omega(\gamma_2) = \alpha(\gamma_1).$
\end{itemize}
\item [(b)] Suppose that $\mathbf{k} \not \in  \mathcal{PS} \cap \mathcal{S}$ and $\mathbf{k} \neq \mathbf{0}.$ The limit sets of every orbit in $\mathcal{T}$ are contained in the boundary of $\mathcal{T}$ and these limit sets are non--periodic orbits. 
\end{itemize}
\end{theorem}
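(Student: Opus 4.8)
The plan is to isolate the two quantities behind the integrability in part~(a) and to read off their behaviour once the hypotheses of~(a) are dropped. Writing $w=1-x-y-z$ for the nonnegative fourth concentration, a direct computation along (\ref{3DLV}) gives, on the interior of $\mathcal{T}$ (where $x,y,z,w>0$),
\[
\phi_1=k_2\log x+k_1\log z,\qquad \phi_2=k_4\log y+k_1\log w,
\]
with $\dot\phi_1=(k_1k_3-k_2k_4)\,w$ and $\dot\phi_2=-(k_1k_3-k_2k_4)\,z$. Hence the whole dichotomy is governed by the sign of $k_1k_3-k_2k_4$, that is, by whether $\mathbf{k}\in\mathcal{S}$. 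I would also record at the outset the elementary fact (already implicit in~(a-1)) that a singular point in $\mathrm{int}\,\mathcal{T}$ forces $k_1k_3=k_2k_4$ and must lie on the segment $R$, and that $R$ meets $\mathrm{int}\,\mathcal{T}$ precisely when the sign conditions defining $\mathcal{PS}$ hold. Consequently, \emph{under the hypotheses of~(b) there is no singular point in $\mathrm{int}\,\mathcal{T}$}. The argument then splits according to whether $\mathbf{k}\in\mathcal{S}$.

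If $\mathbf{k}\notin\mathcal{S}$, then $k_1k_3-k_2k_4\neq0$ and $\dot\phi_1$ has constant nonzero sign on $\mathrm{int}\,\mathcal{T}$, since $w>0$ there; thus $\phi_1$ is a strict Lyapunov function. First I would deduce that no orbit is recurrent in the interior: an $\omega$- or $\alpha$-limit set is invariant and $\phi_1$ is necessarily constant on it, which is incompatible with $\dot\phi_1\neq0$ unless the limit set avoids $\mathrm{int}\,\mathcal{T}$. As $\mathcal{T}$ is compact these limit sets are nonempty, so they lie in $\partial\mathcal{T}$; in particular there are neither periodic orbits nor limit sets in the interior.

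If instead $\mathbf{k}\in\mathcal{S}\setminus\mathcal{PS}$, then $\phi_1$ and $\phi_2$ are genuine first integrals, so every interior orbit lies on a connected component of the one-dimensional curve $C=\{\phi_1=c_1\}\cap\{\phi_2=c_2\}$. Since, by the remark above, $C$ contains no singular point in $\mathrm{int}\,\mathcal{T}$, each such component is either a closed loop lying in $\mathrm{int}\,\mathcal{T}$ or an arc whose endpoints sit on $\partial\mathcal{T}$, the orbit traversing the whole arc and thus accumulating on $\partial\mathcal{T}$ in both time directions. To rule out the closed-loop case I would pass to the invariant regular level surface $S=\{\phi_1=c_1\}$ and apply the Poincar\'e--Bendixson theorem to the planar-type flow it induces on $S\cap\mathcal{T}$: a periodic orbit would bound a disk on $S$ carrying a singular point of the restricted flow, i.e. an interior singular point of (\ref{3DLV}), contradicting their absence. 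Either way every interior orbit reaches $\partial\mathcal{T}$ and its limit sets lie there.

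It remains to see that these boundary limit sets are non-periodic orbits, and the decisive point is that $\partial\mathcal{T}$ supports no periodic orbit at all. Restricting (\ref{3DLV}) to each face one exhibits a strictly monotone coordinate in the interior of that face---$\dot y=k_2yz$ on $\mathcal{X}$, $\dot x=-k_4xw$ on $\mathcal{Y}$, $\dot y=-k_1xy$ on $\mathcal{Z}$ and $\dot x=k_1xy$ on $\Sigma$---so no face carries a closed orbit (the finitely many degenerate choices with several $k_i=0$ being checked directly), while the edges are one-dimensional. A connected invariant limit set contained in this periodic-orbit-free boundary, driven by the gradient-like face flows, is therefore a single non-periodic orbit (possibly a singular point or a heteroclinic arc, both non-periodic). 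The main obstacle I anticipate is exactly this last step, together with the exclusion of interior periodic orbits in the integrable case: one must control how an interior orbit accumulates on $\partial\mathcal{T}$ near the two degenerate edges $\mathcal{R}_{xz}$ and $\mathcal{R}_{py}$, which consist entirely of singular points, and confirm that the induced face flows are genuinely gradient-like so that both the Poincar\'e--Bendixson step on $S\cap\mathcal{T}$ and the identification of each limit set with a single orbit are legitimate; here the face-by-face description developed earlier in the paper is what I would invoke.
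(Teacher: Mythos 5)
There is a genuine gap: your proposal addresses only part (b) of the theorem and leaves part (a) essentially unproved. You introduce $\phi_1=\log H$ and $\phi_2=\log\widetilde V$ and correctly compute $\dot\phi_1=(k_1k_3-k_2k_4)w$ and $\dot\phi_2=-(k_1k_3-k_2k_4)z$, but you only ``read off their behaviour once the hypotheses of (a) are dropped.'' Statement (a-1) is merely ``recorded,'' and (a-3), (a-4) --- the heteroclinic-loop structure on the faces, i.e.\ that the two leaves through a point of $\mathcal{R}_{py}$ (one in $\mathcal{Y}$, one in $\Sigma$) return to the \emph{same} second point of $\mathcal{R}_{py}$ exactly when $k_1k_3=k_2k_4$ --- are never touched. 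Most importantly, (a-2) is the heart of the theorem and does not follow from integrability alone: two independent first integrals only place each interior orbit on a one-dimensional level curve, and one must still exclude that this curve is an arc running to $\partial\mathcal{T}$ rather than a closed loop. The paper does this by restricting to a level surface $\mathcal{H}_C$, observing that its boundary is a heteroclinic loop (this is precisely where (a-3)/(a-4), Lemma \ref{lema:boundary}(b), enter), that the surface carries exactly one interior singular point (on $R$), and then running a Poincar\'e--Bendixson argument together with the non-constancy of the second integral on open sets (Lemma \ref{integrableflows}). None of this machinery appears in your proposal, and your own treatment of the case $\mathbf{k}\in\mathcal{S}\setminus\mathcal{PS}$ shows you are aware that closed loops versus boundary arcs is exactly the dichotomy that must be decided --- you decide it one way there but never decide it the other way under $\mathbf{k}\in\mathcal{PS}\cap\mathcal{S}$.

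Two secondary points on part (b), which otherwise follows the paper's route (monotonicity of $H$ transverse to its level surfaces when $\mathbf{k}\notin\mathcal{S}$; Poincar\'e--Bendixson on an invariant level surface when $\mathbf{k}\in\mathcal{S}\setminus\mathcal{PS}$). First, when some $k_i=0$ (the set $\mathcal{S}\setminus\mathcal{NZ}$) one of $\phi_1,\phi_2$ can be identically constant --- e.g.\ $k_1=k_4=0$ kills $\phi_2$ --- so ``every interior orbit lies on a one-dimensional curve $\{\phi_1=c_1\}\cap\{\phi_2=c_2\}$'' fails; you must choose the two nonconstant, independent integrals among $H,V,\widetilde H,\widetilde V$ as in Proposition \ref{firstint}(b), or rely solely on your Poincar\'e--Bendixson fallback, which does survive. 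Second, your argument that the boundary carries no periodic orbit (a strictly monotone coordinate on each open face) is a clean and slightly more explicit justification of the final step than the paper's appeal to the absence of isolated singular points on $\partial\mathcal{T}$; but the claim that each limit set is then ``a single non-periodic orbit'' still needs the face-by-face foliation analysis you defer to, since connected invariant sets in a boundary containing whole edges of singular points need not a priori be single orbits.
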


\begin{figure}
\begin{center}
\includegraphics[width=10cm,height=5cm]{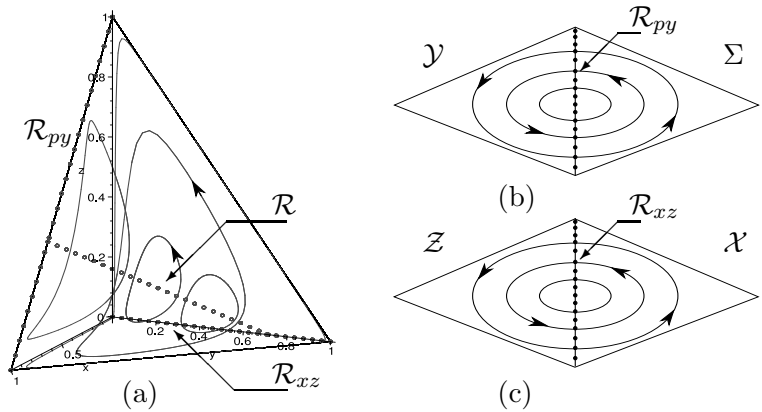}
\end{center}
\begin{picture}(0,0)(0.7,-0.1)
\put(4.6,0.5){(a)}
\put(3.3,4){$\mathcal{R}_{py}$}
\put(6.6,0.75){$\mathcal{R}_{xz}$}
\put(6.8,0.65){\line(-1,0){0.7} \vector(-1,1){0.8}}
\put(11.35,5.5){$\mathcal{R}_{py}$}
\put(11.5,5.4){\line(-1,0){0.3} \vector(-1,-1){0.5}}
\put(11.35,3){$\mathcal{R}_{xz}$}
\put(11.5,2.9){\line(-1,0){0.3} \vector(-1,-1){0.5}}
\put(9.6,3.1){(b)}
\put(8.6,5){$\mathcal{Y}$}
\put(12.6,5){$\Sigma$}
\put(9.6,0.5){(c)}
\put(8.6,2.5){$\mathcal{Z}$}
\put(12.6,2.5){$\mathcal{X}$}
\put(6.6,3){$\mathcal{R}$}
\put(6.8,2.9){\line(-1,0){0.8} \vector(-1,-1){0.8}}
\end{picture}
\caption{Behaviour of the flow of system (\ref{3DLV}) for $\mathbf{k}\in \mathcal{PS} \cap \mathcal{S}:$ (a) in the interior of $\mathcal{T},$ which contains periodic orbits and the segment $\mathcal{R}$ formed by singular points; (b) at the faces $\mathcal{Y}\cup \Sigma$ which exhibit an edge $\mathcal{R}_{py}$ formed by singular points, and heteroclinic orbits giving rise to heteroclinic loops; (c) at the faces $\mathcal{X}\cup \mathcal{Z}$ which exhibit an edge $\mathcal{R}_{xz}$ formed by singular points, and heteroclinic orbits giving rise to heteroclinic loops. \label{tetra2}}\vspace{-0.5cm}
\end{figure}

From Theorem \ref{A}(a) it follows that when $\mathbf{k}\in \mathcal{PS} \cap \mathcal{S}$ the behaviour of the trajectories in the whole region $\mathcal{T}$  is illustrated in Figure \ref{tetra2}.

Theorem \ref{A}(b) is partially a corollary of a general theorem for Lotka--Volterra systems stating that no limit points are in the interior of the non--negative orthant if there is no singular points in the interior. See Theorem 5.2.1 (p. 43) in \cite{HS02}.

On the other hand Theorem \ref{A} completely characterizes the region in the parameter space where the corresponding system (\ref{3DLV}) exhibits self sustained oscillations. Thus the necessary conditions $\mathbf{k}\in\mathcal{S}$ for the existence of such behaviour, given by Di Cera et al. \cite{DPW88}, are here completed with the necessary and sufficient condition  $\mathbf{k}\in\mathcal{PS}\cap \mathcal{S}.$  Furthermore this oscillating behaviour in the interior of $\mathcal{T}$ extends to a heteroclinic behaviour at the boundary. Therefore the period function defined in the interior of $\mathcal{T}$ is a non--constant function; it grows when approaching the boundary. As a final remark, we note that oscillations in system (\ref{3DLV}) take place in a parameter set of measure zero, and they are only ``conservative'' ones; i.e. are not isolated in the set of all periodic orbits.

In dimension greater than two continuous dynamical systems may present chaotic motion, in the sense that the distance of points on trajectories starting close together increases on an exponential rate. This is not our case. The dynamic behaviour of family (\ref{3DLV}) is very simple and non-strange attractors appear. In fact, as shown in Theorem \ref{A}(b) in absence of periodic orbits every orbit goes from one side of the boundary of $\mathcal{T}$ to another. Nevertheless, we can remark certain singular situations related to the form and location of the limit sets. One of these limit set configurations is described in the next result. Before stating it we consider the following singular points in the edges $\mathcal{R}_{py}$ and $\mathcal{R}_{xz},$ respectively 
\begin{equation}\label{ptoslimite}
\begin{array}{ll}
\mathbf{p}_{py}=\left( \frac {k_2}{k_1+k_2},0,\frac {k_1}{k_1+k_2}\right),&
\mathbf{q}_{py}=\left( \frac {k_3}{k_3+k_4},0,\frac {k_4}{k_3+k_4}\right),\\
& \\
\mathbf{p}_{xz}=\left( 0, \frac {k_4}{k_1+k_4},0 \right),&
\mathbf{q}_{xz}=\left( 0, \frac {k_3}{k_3+k_2},0 \right).
\end{array}
\end{equation}

When $\mathbf{k}$ is in the manifold $\mathcal{PS} \cap \mathcal{S},$ the points $\mathbf{p}_{py}$ and $\mathbf{q}_{py}$ are equal and they coincide  with one of the endpoints of the segment $R$ defined in Theorem \ref{A}(a-1). Similarly, the points  $\mathbf{p}_{xz}$ and $\mathbf{q}_{xz}$ are also equal and they coincide with the other endpoint of $R.$ On the other hand, when $\mathbf{k} \in  \mathcal{PS} \setminus \mathcal{S},$ we define the following segments contained in the edges $\mathcal{R}_{py}$ and $\mathcal{R}_{xz},$ respectively
\begin{eqnarray*}\label{seglimite}
{s}_{py}&=\left\{ \mathbf{p}_{py}+r(\mathbf{q}_{py}-\mathbf{p}_{py}): r \in [0,1] \right\},\\
{s}_{xz}&=\left\{ \mathbf{p}_{xz}+r(\mathbf{q}_{xz}-\mathbf{p}_{xz}): r \in [0,1] \right\}.
\end{eqnarray*}

To clarify the exposition of the next result we introduce the subsets $\mathcal{PS}_+=\{\mathbf{k}\in \mathcal{P}: k_i>0\}$ and $\mathcal{PS}_-=\{ \mathbf{k} \in \mathcal{P}: k_i<0\}$ which form a partition of $\mathcal{PS}.$

\begin{theorem}\label{B} Suppose that $\mathbf{k} \in \mathcal{PS} \setminus \mathcal{S}.$ 
\begin{itemize}
\item [(a)] Each of the two limit sets of every orbit in the interior of $\mathcal{T}$ is formed by a singular point contained in the segments $s_{py}$ and $s_{xz}.$ In particular, given a point $\mathbf{p}$ in the interior of $\mathcal{T},$ if $\mathbf{k} \in \mathcal{PS}_+ \cap \mathcal{S}^+ $ or $\mathbf{k} \in \mathcal{PS}_-\cap \mathcal{S}^-,$ then $\alpha(\gamma_{\mathbf{p}}) \in s_{xz}$ and $\omega(\gamma_{\mathbf{p}}) \in s_{py};$ and if $\mathbf{k} \in \mathcal{PS}_+ \cap \mathcal{S}^-$ or $\mathbf{k} \in \mathcal{PS}_- \cap \mathcal{S}^+$ then $\alpha(\gamma_{\mathbf{p}}) \in s_{py}$ and $\omega(\gamma_{\mathbf{p}}) \in s_{xz}.$
\item [(b)] Each of the two limit sets of every orbit in $\mathcal{Y}\cup \Sigma$ is a singular point contained in the edge $\mathcal{R}_{py}.$ Moreover, given two orbits $\gamma_1 \subset \mathcal{Y}$ and $\gamma_2 \subset \Sigma$ such that $\omega(\gamma_1)=\alpha(\gamma_2),$ then $\omega(\gamma_2) \neq \alpha(\gamma_1).$
\item [(c)] Each of the two limit sets of every orbit in $\mathcal{X}\cup \mathcal{Z}$ is a singular point contained in the edge $\mathcal{R}_{xz}.$ Moreover, given two orbits $\gamma_1 \subset \mathcal{X}$ and $\gamma_2 \subset \mathcal{Z}$ such that $\omega(\gamma_1)=\alpha(\gamma_2),$ then $\omega(\gamma_2) \neq \alpha(\gamma_1).$
\end{itemize}
\end{theorem}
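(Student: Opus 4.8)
The plan is to drive everything from four explicit functions that are strictly monotone along the interior flow and that restrict to first integrals on the boundary faces. Writing $v=1-x-y-z$ and recalling $\dot v=v(k_4x-k_3z)$, set
\[
L=k_2\log x+k_1\log z,\quad \tilde L=k_3\log x+k_4\log z,\quad M=k_4\log y+k_1\log v,\quad \hat M=k_3\log y+k_2\log v.
\]
A direct differentiation along interior orbits gives
\[
\begin{array}{ll}
\dot L=(k_1k_3-k_2k_4)\,v, & \dot{\tilde L}=(k_1k_3-k_2k_4)\,y,\\
\dot M=-(k_1k_3-k_2k_4)\,z, & \dot{\hat M}=-(k_1k_3-k_2k_4)\,x.
\end{array}
\]
Since $\mathbf k\notin\mathcal S$ the factor $k_1k_3-k_2k_4$ is nonzero, so each function is strictly monotone in the interior with sign fixed by $\mathcal S^{\pm}$; moreover when $\mathbf k\in\mathcal{PS}_+$ all four are bounded above on $\mathcal T$ (and bounded below when $\mathbf k\in\mathcal{PS}_-$). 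I will carry out the representative case $\mathbf k\in\mathcal{PS}_+\cap\mathcal S^+$; the other three cases are identical after interchanging $(\alpha,\omega)$ and the two edges.

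For part (a), fix an interior orbit $\gamma_{\mathbf p}$. In this case $L$ and $\tilde L$ increase to finite limits, so $\int_0^{\infty}v\,dt<\infty$ and $\int_0^{\infty}y\,dt<\infty$; as $y,v$ are uniformly continuous on the compact set $\mathcal T$, Barbalat's lemma yields $y(t),v(t)\to0$, whence $\omega(\gamma_{\mathbf p})\subset\{y=v=0\}=\mathcal R_{py}$. The decisive algebraic fact is that on $\mathcal R_{py}$ the map $x\mapsto(L,\tilde L)$ is injective: equal values at $x_1\ne x_2$ would make the system $k_2a+k_1b=0,\ k_3a+k_4b=0$ (with $a=\log(x_1/x_2)$, $b=\log\frac{1-x_1}{1-x_2}$) have a nontrivial solution, i.e. $k_2k_4-k_1k_3=0$, contradicting $\mathbf k\notin\mathcal S$. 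Since $L,\tilde L$ converge, $\omega(\gamma_{\mathbf p})$ collapses to a single point $(x_0,0,1-x_0)$. Finally the transverse eigenvalues of the linearization along $\mathcal R_{py}$ are $\lambda_2=k_2-(k_1+k_2)x_0$ and $\lambda_3=-k_3+(k_3+k_4)x_0$; because $y,v\to0$ along the orbit, the linearized equations $\dot y\approx\lambda_2 y$ and $\dot v\approx\lambda_3 v$ force $\lambda_2,\lambda_3\le0$, that is $x_0\in[\,k_2/(k_1+k_2),\,k_3/(k_3+k_4)\,]=s_{py}$. Running the same argument in backward time with $M,\hat M$ gives $x(t),z(t)\to0$, identifies $\alpha(\gamma_{\mathbf p})$ as a single point of $\mathcal R_{xz}$, and confines it to $s_{xz}$ through the edge eigenvalues $\mu_1,\mu_3$; this produces both the location and the orientation asserted in (a).

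For parts (b) and (c) I will read off the flow on each face from its own first integral: $L$ is conserved on $\Sigma$ ($v=0$), $\tilde L$ on $\mathcal Y$ ($y=0$), $M$ on $\mathcal Z$ ($z=0$), and $\hat M$ on $\mathcal X$ ($x=0$). Each face has no interior equilibria, so every face orbit is a level curve of the pertinent integral joining two singular points of the bounding edge ($\mathcal R_{py}$ for $\mathcal Y,\Sigma$; $\mathcal R_{xz}$ for $\mathcal X,\mathcal Z$), the sign of $\dot y$ (respectively $\dot z$) selecting which endpoint is $\alpha$ and which is $\omega$; the extrema of $L,\tilde L$ on $\mathcal R_{py}$ fall exactly at $\mathbf q_{py},\mathbf p_{py}$, and analogously on $\mathcal R_{xz}$. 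Given $\gamma_1\subset\mathcal Y$, $\gamma_2\subset\Sigma$ with a common endpoint $\mathbf m=\omega(\gamma_1)=\alpha(\gamma_2)$, the two remaining endpoints are the second roots of $\tilde L(x)=\tilde L(\mathbf m)$ and of $L(x)=L(\mathbf m)$; a shared second root would once more force $k_2k_4-k_1k_3=0$, so they differ and $\omega(\gamma_2)\ne\alpha(\gamma_1)$, giving (b), while (c) follows verbatim with $(\tilde L,L)$ replaced by $(\hat M,M)$ and $\mathcal R_{py}$ by $\mathcal R_{xz}$.

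I expect the crux to be part (a): upgrading ``the two bounded monotone integrals converge'' to ``the orbit genuinely reaches the edge and converges to a single point of the attracting subsegment.'' This needs the Barbalat step applied to $y$ and $v$ \emph{simultaneously}, the injectivity of $x\mapsto(L,\tilde L)$, and the sign analysis of the transverse eigenvalues — all three resting on the single hypothesis $k_2k_4-k_1k_3\ne0$, which is precisely what separates this regime from the conservative situation of Theorem~\ref{A}.
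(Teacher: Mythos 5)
Your proposal is correct, and for part (a) it takes a genuinely different route from the paper. The paper's proof works with the level surfaces $\mathcal{H}_C$, $\widetilde{\mathcal{H}}_C$, $\mathcal{V}_C$, $\widetilde{\mathcal{V}}_C$ as a one--parameter family of surfaces transversal to the flow, reads off from the signs in (\ref{xporhyv}) that every orbit crosses each of them towards the region containing $s_{py}$ and away from the region containing $s_{xz}$, and then invokes Proposition \ref{prop:ptssing} --- a separate linearization argument at the points of $\mathcal{R}_{py}\setminus s_{py}$ and $\mathcal{R}_{xz}\setminus s_{xz}$ --- to exclude the rest of the two edges. You instead take logarithms of the same four Darboux functions and exploit that along interior orbits they are strictly monotone and bounded, hence convergent; this yields $\int y\,dt<\infty$ and $\int v\,dt<\infty$, Barbalat's lemma forces $y,v\to 0$, the injectivity of $x\mapsto(L,\tilde L)$ on the edge (which fails precisely when $k_1k_3=k_2k_4$) collapses the $\omega$--limit set to a single point, and your sign analysis of the transverse rates $k_2-(k_1+k_2)x_0$ and $(k_3+k_4)x_0-k_3$ --- the same computation that underlies Proposition \ref{prop:ptssing} --- confines that point to $s_{py}$; the backward--time argument with $M,\hat M$ handles $s_{xz}$. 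Both arguments rest on the identical cofactor identities, but yours delivers explicitly that each limit set is a single point (the paper's trapping argument gives containment in the segments and leaves the reduction to a point implicit), at the modest cost of importing Barbalat's lemma and checking uniform continuity on the compact invariant set $\mathcal{T}$. For parts (b) and (c) your argument coincides in substance with the paper's Lemma \ref{lema:boundary}(a) and (c): the restricted first integrals foliate the faces, each leaf meets the relevant edge in two points, and the nonvanishing determinant $k_1k_3-k_2k_4$ prevents the two leaves through a common edge point from sharing their second intersection.
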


\begin{figure}
\begin{center}
\includegraphics[width=10cm,height=5cm]{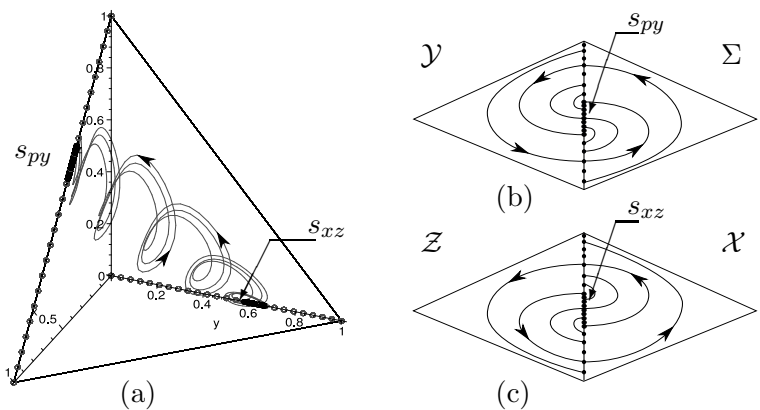}
\end{center}
\begin{picture}(0,0)(0.7,-0.1)
\put(4.6,0.5){(a)}
\put(3.2,3.75){${s}_{py}$}
\put(7.1,2.75){${s}_{xz}$}
\put(7.2,2.65){\line(-1,0){0.6} \vector(-1,-2){0.4}}
\put(11.35,5.5){$s_{py}$}
\put(11.5,5.4){\line(-1,0){0.3} \vector(-1,-3){0.35}}
\put(11.35,3){$s_{xz}$}
\put(11.5,2.9){\line(-1,0){0.3} \vector(-1,-3){0.35}}
\put(9.6,3.1){(b)}
\put(8.6,5){$\mathcal{Y}$}
\put(12.6,5){$\Sigma$}
\put(9.6,0.5){(c)}
\put(8.6,2.5){$\mathcal{Z}$}
\put(12.6,2.5){$\mathcal{X}$}
\end{picture}
\caption{\label{tetra3}Behaviour of the flow of system (\ref{3DLV}) for $\mathbf{k}\in \mathcal{PS}_+ \cap \mathcal{S}^+ $ or  for $\mathbf{k}\in \mathcal{PS}_- \cap \mathcal{S}^- :$ (a) the limit set of every orbit in the interior of $\mathcal{T}$ is a point contained in the segments $s_{xz}$ and $s_{py},$ respectively; (b) at the faces $\mathcal{Y}\cup \Sigma$ where the orbits are heteroclinic orbits connecting singular points located at the edge $\mathcal{R}_{py},$ these heteroclinic orbits do not form heteroclinic loops; (c) at the faces $\mathcal{X}\cup \mathcal{Z}$ where the orbits are heteroclinic orbits connecting singular points located at the edge $\mathcal{R}_{xz},$ these heteroclinic orbits do not form heteroclinic loops.}\vspace{-1.0cm}
\end{figure}

Therefore when $\mathbf{k} \in \mathcal{PS}_+ \cap \mathcal{S}^+ $ or $\mathbf{k} \in \mathcal{PS}_-\cap \mathcal{S}^-,$ the behaviour of the trajectories in the whole region $\mathcal{T}$ is represented in Figure \ref{tetra3}. Moreover since a change in the sign of the parameter $\mathbf{k}$ has the same effect than a change in the sign of the time variable, see system (\ref{3DLV}), the behaviour of the trajectories when $\mathbf{k} \in \mathcal{PS}_- \cap \mathcal{S}^+ $ or $\mathbf{k} \in \mathcal{PS}_+\cap \mathcal{S}^-,$ follows by changing the direction of the flow in Figure \ref{tetra3}.

From Theorem \ref{B} we conclude that the bifurcation taking place at the manifold $\mathcal{S}$ is not only characterized by the behaviour of the flow in the interior of $\mathcal{T}.$ In addition, it must be described by taking into account the changes of the limit sets  $s_{xz}$ and $s_{py}$ at the boundary of $\mathcal{T}$. 
Hence when $\mathbf{k} \in \mathcal{PS}_+ \cap \mathcal{S}^+$ the orbits in the faces   $\mathcal{X}\cup\mathcal{Z}$ are organized in spirals around the segment $s_{xz}$  moving away from it; and the orbits in the faces   $\mathcal{Y}\cup \Sigma$ are organized in spirals around the segment $s_{py}$  approaching it. When $\mathbf{k} \in \mathcal{PS} \cap \mathcal{S},$ the segment $s_{xz}$ reduces to the singular point $\mathbf{p}_{xz}$ and the segment $s_{py}$ reduce to the singular point $\mathbf{p}_{py};$ furthermore the flow in the faces $\mathcal{X}\cup\mathcal{Z}$ and $\mathcal{Y} \cup \Sigma$ describes heteroclinic orbits around them. Finally, when $\mathbf{k} \in \mathcal{PS}_+ \cap \mathcal{S}^-$ the orbits in $\mathcal{X}\cup\mathcal{Z}$ are organized in spiral around the segment $s_{xz}$  approaching it; and the orbits in the faces $\mathcal{Y} \cup \Sigma$ are organized in spirals around the segment $s_{py}$ moving away from it. 
From this we denote the bifurcation taking place at the manifold $\mathcal{PS} \cap \mathcal{S}$ by a focus--center--focus bifurcation. The bifurcation set of system (\ref{3DLV}) is drawn in Figure \ref{fig:bifset}.

The paper is organized as follows. In Section 2 we analyze the existence and the local behaviour of the singular points both in the interior and in the boundary of $\mathcal{T}.$ In Section 3 we deal with the first integrals of the flow and we characterize the integrability of the flow. Using these first integrals, in Section 4 we analyze the flow at the boundary of $\mathcal{T}.$ In Section 5 and by using again the first integrals we analyze the flow in the interior of $\mathcal{T}$ and we prove the main results of the paper.

\begin{figure}
\begin{center}
\includegraphics{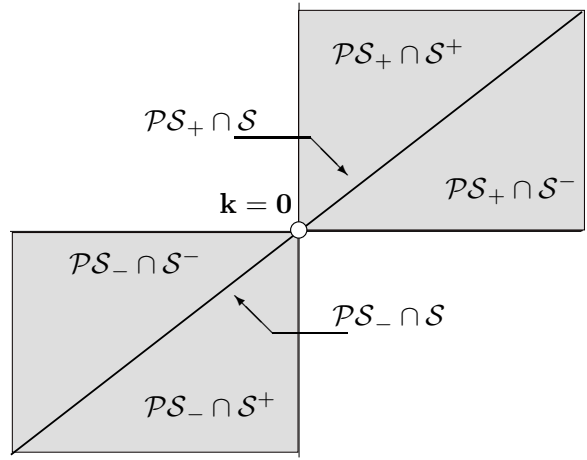}
\end{center}
\begin{picture}(0,0)(0.75,-0.1)
\put(8.6,6){$\mathcal{PS}_+ \cap \mathcal{S}^+$}
\put(10.1,4.2){$\mathcal{PS}_+ \cap \mathcal{S}^-$}
\put(5.1,3.2){$\mathcal{PS}_- \cap \mathcal{S}^-$}
\put(6.1,1.3){$\mathcal{PS}_- \cap \mathcal{S}^+$}
\put(6.1,5.1){$\mathcal{PS}_+ \cap \mathcal{S}$}
\put(7.3,5){\line(1,0){1} \vector(1,-1){0.5}}
\put(8.6,2.5){$\mathcal{PS}_- \cap \mathcal{S}$}
\put(8.8,2.4){\line(-1,0){1} \vector(-1,1){0.5}}
\put(7.1,4){$\mathbf{k} = \mathbf{0}$}
\end{picture}
\caption{\label{fig:bifset}Representation of the bifurcation set in a two dimensional parameter space.}\vspace{-0.75cm}
\end{figure}

\section{Singular points}

In the following proposition we summarise the results about the existence, location and stability of the singular points of system (\ref{3DLV}).

\begin{proposition} \label{ptsing}The half straight lines $\mathcal{R}_{py}$ and $\mathcal{R}_{xz}$ are formed by singular points.
\begin{itemize}
\item [(a)] If $\mathbf{k} \in \mathcal{NZ}$ there are no other singular points in the boundary of the simplex.
\begin{itemize}
\item [(a-1)] Suppose that $\mathbf{k} \in \mathcal{PS} \cap \mathcal{S}.$ The open segment
\[
R=\left\{\left( \frac {k_3}{k_4}z, \frac {k_4-(k_4 +k_3)z}{k_4+k_1}, z\right): 0<z<\frac {k_4}{k_3+k_4}\right\}
\]
is formed by all the singular points in the interior of the region $\mathcal{T}.$ Moreover the Jacobian matrix of the vector field evaluated at each of these points has one real eigenvalue equal to zero and two purely imaginary eigenvalues.
\item [(a-2)] Suppose that $\mathbf{k} \in \mathcal{NZ} \setminus \left\{\mathcal{PS} \cap \mathcal{S} \right\}.$ There are no singular points in the interior of region $\mathcal{T}.$
\end{itemize}
\item[(b)] Suppose that $\mathbf{k}\not \in \mathcal{NZ}$ and $\mathbf{k} \neq \mathbf{0},$ in this case there are no singular points in the interior of  $\mathcal{T}.$ In fact the singular points are on the boundary of $\mathcal{T}$ and they complete either edges or whole faces. 
\end{itemize}
\end{proposition}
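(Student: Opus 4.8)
The plan is to reduce everything to the three factorized components of the vector field. Setting $w=1-x-y-z$, a point of $\mathcal{T}$ is singular exactly when each of $x(k_1y-k_4w)$, $y(k_2z-k_1x)$ and $z(-k_2y+k_3w)$ vanishes. The two lines of fixed points come first, by direct substitution: on $\mathcal{R}_{py}$ one has $y=0$ and $w=0$, and on $\mathcal{R}_{xz}$ one has $x=z=0$, so in either case all three products vanish identically for every $\mathbf{k}$. For the remaining assertions the classification reduces to deciding, on the interior and on each boundary piece, whether the three brackets can vanish simultaneously.

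For the interior, where $x,y,z>0$ and $w>0$, all three brackets must vanish. The middle equation gives $k_2z=k_1x$; eliminating $y$ between $k_1y=k_4w$ and $k_2y=k_3w$ yields $(k_1k_3-k_2k_4)\,w=0$, so that $\mathbf{k}\in\mathcal{S}$ is necessary whenever $\mathbf{k}\in\mathcal{NZ}$. Solving the resulting linear relations for $x$ and $y$ in terms of $z$ and then imposing $x,y,z,w>0$ forces the common-sign conditions $k_1k_2>0$, $k_1k_3>0$, $k_1k_4>0$, i.e.\ $\mathbf{k}\in\mathcal{PS}$, and at the same time produces the explicit parametrization of $R$ together with the range $0<z<k_4/(k_3+k_4)$. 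This establishes both the location statement of (a-1) and the nonexistence statement (a-2).

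To identify the spectrum along $R$, I would evaluate the Jacobian $J$ at an interior singular point, using that the three brackets vanish there so that every entry of $J$ becomes a single monomial in $x,y,z$ times a combination of the $k_i$. Three observations then finish (a-1). First, because $R$ is a curve of singular points its tangent lies in $\ker J$, so $0$ is an eigenvalue and $\det J=0$. Second, $\mathrm{tr}\,J = k_4x-k_3z$, which vanishes on $R$ since there $x=(k_3/k_4)z$. Third, the sum of the three $2\times2$ principal minors collapses to $y\bigl[k_1x(k_1+k_4)+k_2z(k_2+k_3)\bigr]$, which is strictly positive for $\mathbf{k}\in\mathcal{PS}$ because all four $k_i$ share one sign. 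Writing the characteristic polynomial as $\lambda\bigl(\lambda^2-(\mathrm{tr}\,J)\lambda+\sigma_2\bigr)$ with $\sigma_2$ this minor sum, the two nonzero eigenvalues satisfy $\lambda_2+\lambda_3=0$ and $\lambda_2\lambda_3=\sigma_2>0$, hence form a purely imaginary conjugate pair, as claimed.

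It remains to rule out other boundary singular points under $\mathcal{NZ}$ and to treat the degenerate regime (b). For (a), I would run through the open faces and edges by vanishing pattern: on each open face one of the three components reduces to a nonzero monomial (for instance $\dot y=k_2yz$ on $\mathcal{X}$ and $\dot x=k_1xy$ on $\Sigma$), so no singular point survives there; the same check on the edges $\mathcal{R}_{yz},\mathcal{R}_{xy},\mathcal{R}_{px},\mathcal{R}_{pz}$ leaves only $\mathcal{R}_{py}\cup\mathcal{R}_{xz}$, their endpoints being the vertices. For (b), with some $k_i=0$, the corresponding bracket degenerates; organizing the argument by the zero pattern of $(k_1,k_2,k_3,k_4)$, I would verify in each subcase that no interior equilibrium survives and identify which edge or face becomes entirely singular when coupled rate differences vanish. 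I expect the genuinely delicate step to be the spectral computation of (a-1)---recognizing that the trace vanishes and the principal-minor sum stays positive exactly on $\mathcal{PS}$, which is what separates this center-type degeneracy from a generic one; by contrast the boundary enumeration in (a) and the subcases in (b), though lengthy, are routine sign-and-vanishing checks.
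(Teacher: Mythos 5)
Your proposal is correct and follows essentially the same route as the paper: classify singular points by the vanishing pattern of the three factors, use linear elimination in the interior to force $\mathbf{k}\in\mathcal{S}$ and then positivity to force $\mathbf{k}\in\mathcal{PS}$, and compute the Jacobian spectrum along $R$. Your spectral bookkeeping checks out against the paper's --- using $k_1k_3=k_2k_4$ and $x=(k_3/k_4)z$ one has $y\bigl[k_1x(k_1+k_4)+k_2z(k_2+k_3)\bigr]=yz(k_1+k_2)(k_2+k_3)$, which is the paper's coefficient $b$ --- and the boundary and degenerate cases you leave as routine sign checks are handled with comparable brevity in the paper itself.
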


\begin{proof}
Straightforward computations show that the half straight lines $\mathcal{R}_{py}$ and $\mathcal{R}_{xz}$ are formed by singular points.

(a-1) Suppose now that $\mathbf{k} \in \mathcal{NZ}.$ Hence none of the components of the parameter $\mathbf{k}$ is zero. In this case the singular points are given by the solutions to the following systems
\begin{equation*}
\left.
\begin{array}{r}
x=0\\
y z =0\\
z(-k_2 y + k_3 (1-y-z))=0
\end{array}
\right\}\hspace{0.25cm}
\left.
\begin{array}{r}
-x(1-x-z)=0\\
y=0\\
 z(1-x-z)=0
\end{array}
\right\} 
\end{equation*}
\begin{equation*}
\left.
\begin{array}{r}
x(k_1 y-k_4(1-x-y))=0\\
- yx =0\\
 z =0 
\end{array}\right\} \hspace{0.25cm}
\left.
\begin{array}{r}
k_4x +(k_1+k_4)y + k_4 z= k_4\\
(k_1 +k_4) y +(k_3+k_4) z =k_4\\
(k_2 k_4-k_1 k_3) (y + z) =k_2 k_4-k_1 k_3
\end{array}
\right\}
\end{equation*}
where in the last one we impose $x y z \neq 0$ to avoid repetitions.
From the three first systems it is easy to conclude that there are no other singular points than those in the half straight lines $\mathcal{R}_{py}$ and $\mathcal{R}_{xz}.$ With respect to the last one we distinguish two situations. 

First let us suppose that $\mathbf{k} \not \in S,$ that is $k_2 k_4-k_1 k_3 \not = 0.$ From the third equation it follows that $y+z=1,$ and therefore $x=0.$ Since $k_1 k_4 \neq 0$ from the first equation we conclude that $y=0$ and $z=1.$ Hence the singular point is one of the endpoints of the edge $\mathcal{R}_{xz};$ i.e. it does not belong to the interior of $\mathcal{T}.$

Suppose now that $\mathbf{k} \in S,$ that is $k_2 k_4-k_1 k_3 = 0.$ Thus the linear system is equivalent to the following one
\[
\left.
\begin{array}{rl}
k_4x +(k_1+k_4)y + k_4 z&= k_4\\
(k_1 +k_4) y +(k_3+k_4) z &=k_4.
\end{array} \right\}
\]
If $k_1 +k_4=0,$ then from the first equation we obtain $x+z=1.$ Therefore $y=0$ and the singular point belongs to $\mathcal{R}_{py}.$ On the contrary, if $k_1 +k_4 \not =0,$ then there exists a straight line of singular points parametrically defined by $x=z k_3/k_4 $ and $y=(k_4-(k_3+k_4) z)/(k_1+k_4).$ Since the singular points in the interior of $\mathcal{T}$ must satisfy that $x > 0, y > 0, z > 0$ and $x+y+z < 1,$ then there exist singular points in the interior of $\mathcal{T}$ if and only if 
\[
\frac {k_3}{k_4} > 0,\quad \frac {k_3+k_4}{k_1+k_4}z < \frac {k_4}{k_1+k_4},\quad \frac {k_1}{k_4} \left(\frac {k_3+k_4}{k_1+k_4} z \right) < \frac {k_1}{k_1+k_4},\quad z > 0.
\]
It is easy to check that the previous inequalities are equivalent to 
\[
\frac {k_3}{k_4} > 0,\quad \frac {k_3+k_4}{k_1+k_4}z < \frac {k_4}{k_1+k_4},\quad \frac {k_1}{k_4} > 0,\quad z>0.
\]
Since $\mathbf{k}\in \mathcal{S}$ we have $k_1/k_4=k_2/k_3.$ Therefore we conclude that there exist singular points in the interior of $\mathcal{T}$ if and only if all the components of $\mathbf{k}$ have the same sign; that is $\mathbf{k} \in \mathcal{PS}.$ In such case these singular points are given by  
\begin{equation}\label{intsingpoints}
x=\frac {k_3}{k_4}z,\quad y=\frac{k_4-(k_3+k_4) z}{k_1+k_4},\quad 0 < z < \frac {k_4}{k_1+k_4},
\end{equation}
which proves statement (a-1).

(a-2) The Jacobian matrix of the vector field defined by the differential equation (\ref{3DLV}) and evaluated at the singular points (\ref{intsingpoints}) is given by
\[
\left(
\begin{array}{ccc}
k_3 z & (k_2+k_3) z & k_3 z \\
- k_1 y & 0 & k_2 y \\
-k_3 z & -(k_2+k_3) z & -k_3 z \\
\end{array}
\right)
\]
The characteristic polynomial is equal to $\lambda(\lambda^2+b)=0,$ where $b=zy(k_1+k_2)(k_2+k_3).$ Since $\mathbf{k} \in \mathcal{PS}$ the coefficient $b$ is positive. Then we get one zero eigenvalue and a pair of complex conjugated eigenvalues with zero real part.

(b) If $\mathbf{k} \not \in \mathcal{NZ}$ and $\mathbf{k} \neq \mathbf{0},$ then at least one of the coordinates of $\mathbf{k}$ is equal to zero and at least one is different from zero. Without loss of generality we suppose that $k_1=0$ and $k_2 \neq 0.$ From the second equation in (\ref{3DLV}) it follows that the coordinates of the singular points satisfy $yz=0.$ Therefore the singular points are contained in the boundary of $\mathcal{T}.$ Moreover from the remainder equations in (\ref{3DLV}) it follows that $-k_4x(1-x-y-z) =0$ and $k_3z(1-x-y-z) =0.$ We conclude that, depending on whether the parameters $k_3$ and $k_4$ are zero or not, singular points complete either whole faces or edges, respectively. 
\end{proof}

In the next result we deal with the singular points located at the edges $\mathcal{R}_{py}$ and $\mathcal{R}_{xz}$ which are not on the segments $s_{py}$ and $s_{xz},$ respectively. Note that these points are not hyperbolic singular points, so that we can not apply Hartman--Grobman Theorem to describe the behaviour of the flow in a neighbourhood of them.

\begin{proposition}\label{prop:ptssing}
If $\mathbf{k} \in \mathcal{PS} \setminus \mathcal{S},$ then no singular point in  $\mathcal{R}_{py} \setminus s_{py}$ and $\mathcal{R}_{xz} \setminus s_{xz}$ is the limit set of an orbit in the interior of the region $\mathcal{T}.$ 
\end{proposition}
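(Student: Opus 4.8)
The plan is to reduce the statement to a local analysis at the non-hyperbolic equilibria filling the two edges, exploiting that each edge is a normally hyperbolic invariant curve of singular points whose local stable and unstable manifolds are forced to lie inside the invariant faces of $\partial\mathcal{T}$.

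First I would linearize along $\mathcal{R}_{py}$. At a point $(x_0,0,1-x_0)$ the Jacobian of (\ref{3DLV}) has characteristic polynomial $\lambda(\lambda-\lambda_2)(\lambda-\lambda_3)$ with
\[
\lambda_2 = k_2(1-x_0)-k_1 x_0, \qquad \lambda_3 = k_4 x_0-k_3(1-x_0),
\]
the zero eigenvalue corresponding to the edge direction $(1,0,-1)$. A direct computation shows that $(1,1,1)$ is a left eigenvector for $\lambda_3$, so the right eigenvector of $\lambda_2$ lies in the tangent plane of $\Sigma$, while the eigenvector of $\lambda_3$ has vanishing $y$-component and hence lies in $\mathcal{Y}$. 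Thus $\lambda_2$ governs the transverse dynamics inside the invariant face $\Sigma$ and $\lambda_3$ that inside the invariant face $\mathcal{Y}$. The zeros of $\lambda_2$ and $\lambda_3$ occur exactly at the endpoints $\mathbf{p}_{py}$ and $\mathbf{q}_{py}$ of $s_{py}$; since $\mathbf{k}\in\mathcal{PS}\setminus\mathcal{S}$ these endpoints are distinct and interior to the edge, and the sign table gives $\lambda_2\lambda_3<0$ precisely on $\mathcal{R}_{py}\setminus s_{py}$ (the common sign on $s_{py}$ being the one distinguishing $\mathcal{S}^+$ from $\mathcal{S}^-$). The analogous computation along $\mathcal{R}_{xz}$, at $(0,y_0,0)$, yields eigenvalues $0$, $\lambda_2'=(k_1+k_4)y_0-k_4$ and $\lambda_3'=k_3-(k_2+k_3)y_0$ whose eigenvectors lie in $\mathcal{Z}$ and $\mathcal{X}$ respectively, vanishing exactly at $\mathbf{p}_{xz},\mathbf{q}_{xz}$, with $\lambda_2'\lambda_3'<0$ exactly on $\mathcal{R}_{xz}\setminus s_{xz}$.

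With this eigenstructure the conclusion follows from normal hyperbolicity. Fix $\mathbf{p}^{\ast}\in\mathcal{R}_{py}\setminus s_{py}$; there the two transverse eigenvalues are real, nonzero and of opposite sign, so a neighbourhood of $\mathbf{p}^{\ast}$ in the edge is a normally hyperbolic invariant curve of equilibria with one-dimensional stable and unstable normal bundles. Suppose, for contradiction, that some orbit $\gamma$ in the interior of $\mathcal{T}$ has $\omega(\gamma)=\{\mathbf{p}^{\ast}\}$. Then the forward orbit stays in a small neighbourhood of the edge and converges to it, so $\gamma$ belongs to the local stable manifold of the edge, and in fact to the stable fibre through $\mathbf{p}^{\ast}$, because the presence of a genuine unstable direction forbids any other mode of convergence to a single equilibrium. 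This local stable manifold is two-dimensional and tangent to the span of the edge direction and the stable eigenvector. Since that eigenvector is tangent to whichever of the invariant faces $\mathcal{Y}$ or $\Sigma$ carries the negative transverse eigenvalue, and since the edge also lies in that face, the tangent plane of the stable manifold coincides with the tangent plane of the face; by invariance of the face and uniqueness of the local stable manifold they coincide locally, forcing $\gamma\subset\partial\mathcal{T}$, a contradiction. The symmetric argument with the local unstable manifold (equivalently, reversing time) rules out $\alpha(\gamma)=\{\mathbf{p}^{\ast}\}$, and the verbatim repetition on $\mathcal{R}_{xz}\setminus s_{xz}$ using $\mathcal{X},\mathcal{Z}$ completes the proof.

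The delicate point is the reduction ``convergence of an interior orbit to a single edge equilibrium $\Rightarrow$ membership in an invariant face''. Because of the zero eigenvalue along the edge one cannot invoke Hartman--Grobman, so the argument must rest on the normally hyperbolic structure of the curve of equilibria: the edge being a genuine invariant manifold of rest points integrates the centre direction exactly and lets one identify the local stable/unstable manifolds with the invariant faces $\mathcal{Y},\Sigma$ (resp. $\mathcal{X},\mathcal{Z}$). I would make this precise either through invariant-manifold theory for normally hyperbolic sets or, more elementarily, by a Lyapunov-type estimate on the transverse coordinates showing that the unstable coordinate must vanish identically along any orbit converging to $\mathbf{p}^{\ast}$.
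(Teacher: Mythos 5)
Your proposal is correct and follows essentially the same route as the paper: both rest on computing the two nonzero transverse eigenvalues at a point of $\mathcal{R}_{py}\setminus s_{py}$ (resp.\ $\mathcal{R}_{xz}\setminus s_{xz}$), observing that they vanish exactly at $\mathbf{p}_{py}$, $\mathbf{q}_{py}$ and have opposite signs off $s_{py}$, and identifying the corresponding eigendirections with the two invariant faces meeting along the edge. The paper implements the final local step by an explicit linear conjugation to a system with invariant planes $\{y_p=0\}$ and $\{z_p=0\}$ together with the sign relations $\mathrm{sign}(\dot y_p)=\mathrm{sign}(y_p)\,\mathrm{sign}(\lambda_2)$ and $\mathrm{sign}(\dot z_p)=\mathrm{sign}(z_p)\,\mathrm{sign}(\lambda_3)$ near the origin --- precisely the elementary Lyapunov-type transverse estimate you offer at the end as an alternative to invoking normally hyperbolic invariant manifold theory.
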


\begin{proof}
Let $\mathbf{p}$ be a point in the set $\mathcal{R}_{py} \setminus s_{py},$ that is $\mathbf{p}=(x_0,0,1-x_0)$ where  either 
\begin{equation}\label{eq:xo}
x_0 > \max\left\{ \frac {k_2}{k_1+k_2}, \frac {k_3}{k_3 +k_4} \right\}
\quad\mbox{or}\quad
x_0 < \min \left\{ \frac {k_2}{k_1+k_2}, \frac {k_3}{k_3 +k_4}\right\},
\end{equation}
see expression (\ref{ptoslimite}). If we consider a point $\mathbf{p}$ in the set $\mathcal{R}_{xz} \setminus s_{xz},$ the following arguments can be applied in a similar way.

Through the change of variables $\bar{x}=x-x_0,$ $\bar{y}=y$ and $\bar{z}=z-1+x_0,$ system (\ref{3DLV}) can be written as system $\dot{\bar{\mathbf{x}}}=A\bar{\mathbf{x}}+\mathbf{Q}(\bar{\mathbf{x}})$ where $\bar{\mathbf{x}}=(\bar{x},\bar{y},\bar{z})^T,$   
\begin{equation*}
 A=\left(
\begin{array}{ccc}
k_4 x_0 & (k_1+k_4) x_0  & k_4 x_0 \\
0   &  k_2-(k_1+k_2) x_0 & 0 \\
k_3 (x_0 -1) & (k_2+k_3)(x_0-1) & k_3(x_0-1)  
\end{array}
\right)
\end{equation*}
and
\begin{equation*}
\mathbf{Q}(\bar{\mathbf{x}})=\left(
\begin{array}{c}
\bar{x}(k_4 \bar{x}+(k_1+k_4) \bar{y}+k_4 \bar{z}) \\
\bar{y}(k_2 \bar{z}-k_1 \bar{x}) \\
\bar{z}(-k_3 \bar{x}-(k_2+k_3) \bar{y} -k_3 \bar{z})
\end{array}
\right).
\end{equation*}
The eigenvalues of the matrix $A$ are $\lambda_1=0, \lambda_2=(k_3+k_4)x_0-k_3$ and $\lambda_3=k_2-x_0(k_1+k_2).$ From (\ref{eq:xo}) it is easy to conclude that $\lambda_2 \lambda_3 <0.$ Therefore there exists a regular matrix $P$ such that $PAP^{-1}=diag\{0,\lambda_2,\lambda_3\}.$ 

Going through the change of coordinates $\mathbf{x}_{p}=P\bar{\mathbf{x}}$ the system can be rewritten as 
\begin{equation}\label{sys:xp}
\left\{
\begin{array}{l}
\dot{x}_{p}=\frac {k_2 z_{p}-k_3 y_{p} }{k_1k_4x_0} \left(k_4k_1 x_{p}+k_1(x_0-1)(k_3+k_4)y_p+k_4(x_0-1)(k_2+k_4)z_p\right) \\ \\
\dot{y}_p=\frac {y_p}{k_1k_4x_0}
\left( 
(\lambda_2-(k_3+k_4)x_p)k_1k_4x_0+k_1(k_3^2(1-x_0)+k_4^2x_0)y_p \right.\\ \\
\left.
\hspace{2cm}+k_4(k_1 k_4 x_0 +k_2k_3(1-x_0))z_p
\right)\\ \\
\dot{z}_p=\frac {z_p}{k_1k_4x_0}
\left(
(\lambda_3+(k_2+k_1)x_p)k_1k_4x_0-k_1(k_1k_4x_0+k_2k_3(1-x_0))y_p \right. \\ \\
\left.
\hspace{2cm}- k_2(k_2^2(1-x_0)+k_1^2x_0)z_p
\right)
\end{array}
\right.
\end{equation}
System (\ref{sys:xp}) has two invariant planes $\{y_p=0\}$ and $\{z_p=0\}$ intersecting at a straight line formed by singular points, which corresponds to the segment $\mathcal{R}_{py}.$ The direction of the vector field in a sufficiently small neighbourhood of the origin satisfies that
\begin{equation*}
\begin{array}{l}
{\rm sign}(\dot{y}_p)={\rm sign}(y_p) {\rm sign}(\lambda_2)\\ \\
{\rm sign}(\dot{z}_p)={\rm sign}(z_p) {\rm sign}(\lambda_3).
\end{array}
\end{equation*}
We conclude that the origin is neither the $\alpha$--limit set nor the $\omega$--limit set of any orbit in the interior of the regions $\{y_p>0,z_p>0\}, \{y_p>0, z_p<0\},\{y_p<0,z_p>0\}$ and $\{y_p<0,z_p<0\}.$ From this we conclude the proposition. 
\end{proof}

\section{Invariant algebraic surfaces and first integrals}

In 1878 Darboux showed how to construct first integrals of a planar polynomial vector field possessing sufficient invariant algebraic curves. Recent works improved the Darboux's exposition taking into account other dynamical objects like exponential factors and independent singular points,  see \cite{C94}, \cite{CL99} and \cite{CLS97} for more details. The extension of the Darboux theory to $n$--dimensional systems of polynomial differential equations can be found in the work by Llibre and Rodr\'{\i}guez \cite{LR00}. A brief introduction to the three dimensional case can be found in \cite{CL00}

Following \cite{CL00} a first integral of system (\ref{3DLV}) is a real function $F$ non--constant over the region $\mathcal{T}$ and such that the level surfaces $\mathcal{F}_C=\{(x,y,z)\in \mathcal{T}: F(x,y,z)=C\}$ are invariants by the flow; that is
\[
XF=\frac {\partial F}{\partial x} \dot{x} + \frac {\partial F}{\partial y}\dot{y} + \frac {\partial F}{\partial z}\dot{z}=0
\]
where $X=\dot{x}\frac {\partial}{\partial x} +\dot{y}\frac{\partial}{\partial y}+\dot{z}\frac{\partial}{\partial z}$ is the vector field associated to the system of differential equations. Thus the existence of a first integral allows the reduction of the dimension of the problem by one. Moreover, the existence of two independent first integrals allows the integrability of the flow. 

Let $f\in \mathbb{R}[x,y,z]$ be a polynomial function. The algebraic surface $f=0$ is called an invariant algebraic surface of the system (\ref{3DLV}) if there exists a polynomial $K\in \mathbb{R}[x,y,z]$ such that $Xf=Kf.$ The polynomial $K$ is called the cofactor of $f.$ The following result is a corollary of Theorem 2 in \cite{CL00}.

\begin{theorem}\label{jaume}
Suppose that the polynomial vector field (\ref{3DLV}) admits p invariant algebraic surfaces $f_i=0$ with cofactors $K_i$ for $i=1,2,\ldots,p.$ If there exist $\lambda_i \in \mathbb{R}$ not all zero such that $\sum_{i=1}^p \lambda_i K_i=0,$ then the function $f_1^{\lambda_1}f_2^{\lambda_2}\ldots f_p^{\lambda_p}$ is a first integral of the vector field (\ref{3DLV}).
\end{theorem}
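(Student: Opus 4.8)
The plan is to verify the defining property $XF=0$ directly for $F=f_1^{\lambda_1}\cdots f_p^{\lambda_p}$, by combining the cofactor identities $Xf_i=K_if_i$ with the linear relation $\sum_{i=1}^p\lambda_iK_i=0$. Since the exponents $\lambda_i$ are real and need not be integers, $F$ is a priori differentiable only on the open set $V=\{(x,y,z):f_i(x,y,z)\neq 0,\ i=1,\dots,p\}$, and only after fixing a branch, e.g.\ restricting to a connected component of $V$ on which each $f_i$ has constant sign so that $F$ is a well-defined $C^1$ function there; I would therefore carry out the whole argument on $V$.

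On $V$ the computation is a logarithmic-derivative calculation along the flow. A direct application of the chain rule to $X=\dot{x}\,\partial_x+\dot{y}\,\partial_y+\dot{z}\,\partial_z$ gives, on each sign component of $V$,
\[
\frac{XF}{F}=\sum_{i=1}^p\lambda_i\,\frac{Xf_i}{f_i},
\]
which is the same expression one obtains formally from $X(\log F)=\sum_{i=1}^p\lambda_iX(\log f_i)$. Inserting the cofactor identity $Xf_i=K_if_i$ cancels each $f_i$ against the corresponding $1/f_i$, leaving
\[
\frac{XF}{F}=\sum_{i=1}^p\lambda_iK_i.
\]
The hypothesis $\sum_{i=1}^p\lambda_iK_i=0$ then forces $XF=0$ throughout $V$, so every level surface $\mathcal{F}_C$ of $F$ is invariant by the flow.

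To finish, I would observe that $F$ is non-constant on $\mathcal{T}$, as required by the definition of first integral above: this holds because the surfaces $f_i=0$ are genuinely distinct and the $\lambda_i$ are not all zero, and it is in any case built into the hypotheses of Theorem 2 in \cite{CL00}, of which the present statement is the specialization to invariant algebraic surfaces with no exponential factors. The step demanding the most care is not any ingenuity in the calculation but the bookkeeping around the domain: because the $\lambda_i$ may be non-integer, the identity $XF=\bigl(\sum_{i=1}^p\lambda_iK_i\bigr)F$ is to be read on the open set $V$ where $F$ is well-defined and differentiable, and the conclusion that $F$ is a first integral is understood in that sense.
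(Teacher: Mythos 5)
Your proof is correct. The paper itself offers no proof of this statement: it is quoted verbatim as a corollary of Theorem 2 in \cite{CL00}, and your logarithmic-derivative computation $XF=\bigl(\sum_{i=1}^p\lambda_iK_i\bigr)F$ is exactly the standard Darboux argument underlying that cited result, so there is nothing to compare beyond noting that you have supplied the argument the paper delegates to the literature. Your attention to the domain issue is well placed, since the exponents $k_i$ actually used later (e.g.\ in $H=x^{k_2}z^{k_1}$) need not be integers and the resulting functions are genuinely only defined where the $f_i$ do not vanish; the non-constancy of $F$ on $\mathcal{T}$, which you correctly flag as not following from the algebraic identity alone, is checked separately by the paper in Proposition~\ref{firstint} for the specific surfaces it uses.
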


Now we deal with the existence of Darboux type first integrals of system (\ref{3DLV}). Consider the algebraic surfaces $f_1(x,y,z)=x,$ $f_2(x,y,z)=y,$ $f_3(x,y,z)=z$ and $f_4(x,y,z)=x+y+z-1.$ It is easy to check that $X f_i =f_i K_i,$ with $i=1,2,3,4,$ where $K_1(x,y,z)=k_4 x+ (k_1+k_4)y+k_4 z-k_4,$ $K_2(x,y,z)=-k_1 x + k_2 z,$ $K_3(x,y,z)=-k_3 x-(k_2+k_3)y-k_3 z+k_3$ and $K_4(x,y,z)=k_4 x -k_3 z.$
Therefore $f_i=0$ is an invariant surface with cofactor $K_i,$ with $i=1,2,3,4.$

From Theorem \ref{jaume}, if there exist $\lambda_i$ not all zero and such that $\sum_{i=1}^4 \lambda_i K_i=0,$ then $F=f_1^{\lambda_1}f_2^{\lambda_2}f_3^{\lambda_3}f_4^{\lambda_4}$
is a first integral of system (\ref{3DLV}).
Since
\begin{eqnarray*}
\sum_{i=1}^4 \lambda_i K_i&=&(\lambda_4 k_4-\lambda_2 k_1) x+(\lambda_1 k_1- \lambda_3 k_2) y \\ &+&(\lambda_2 k_2-\lambda_4 k_3) z+ (\lambda_3 k_3 -\lambda_1 k_4) (1-x-y-z)
\end{eqnarray*}
the existence of such $\lambda_i$ is equivalent to the existence of non--trivial solutions of the homogeneous linear systems
\begin{equation}\label{firstintsys}
\left(
\begin{array}{cc}
k_1 & -k_2 \\
-k_4 & k_3
\end{array}
\right)
\left(
\begin{array}{c}
\lambda_1\\
\lambda_3
\end{array}
\right)=
\left(
\begin{array}{c}
0\\
0 \\
\end{array}
\right)
\quad\mbox{and}\quad
\left(
\begin{array}{cc}
k_2 & -k_3 \\
-k_1 & k_4
\end{array}
\right)
\left(
\begin{array}{c}
\lambda_2\\
\lambda_4
\end{array}
\right)=
\left(
\begin{array}{c}
0\\
0 \\
\end{array}
\right).
\end{equation}
Note that the determinant of both previous systems is equal to $k_1 k_3 - k_2 k_4.$ Therefore when $\mathbf{k}$ belongs to the set $\mathcal{S}$ there exist Darboux type first integrals of system (\ref{3DLV}).

Under the assumption $\mathbf{k} \in \mathcal{S}$ the linear system (\ref{firstintsys}) has the following non--trivial solutions $(\lambda_1,\lambda_2,\lambda_3,\lambda_4) \to (k_2,0,k_1,0), (0,k_3,0,k_2), (k_3,0,k_4,0)$ and $(0,k_4,0,k_1).$ Therefore the functions  $H(x,y,z)=x^{k_2}z^{k_1},$ $V(x,y,z)=y^{k_3}(1-x-y-z)^{k_2},$ $\widetilde{H}(x,y,z)=x^{k_3}z^{k_4}$ and  $\widetilde{V}=y^{k_4}(1-x-y-z)^{k_1}$ are first integrals. In fact

\begin{equation}\label{xporhyv}
\begin{array}{l}
XH=x^{k_2}z^{k_1}(1-x-y-z)(k_1 k_3 - k_2 k_4 ) \\
XV=y^{k_3}(1-x-y-z)^{k_2}x(k_2 k_4 -k_1 k_3),\\ 
X\widetilde{H}=x^{k_3}z^{k_4}(k_1 k_3 - k_2 k_4)y \\
X\widetilde{V}=y^{k_4}(1-x-y-z)^{k_1}(k_2k_4 - k_1k_3)z
\end{array}
\end{equation}
which vanish in the whole region $\mathcal{T}$ only when $\mathbf{k} \in \mathcal{S}.$  

\begin{proposition}\label{firstint} Consider the functions $H(x,y,z)=x^{k_2}z^{k_1},$ $\widetilde{H}(x,y,z)=x^{k_3}z^{k_4},$  $V(x,y,z)=y^{k_3}(1-x-y-z)^{k_2}$ and $\widetilde{V}(x,y,z)=y^{k_4}(1-x-y-z)^{k_1}.$ 
\begin{itemize}
\item [(a)] If $\mathbf{k} \in \mathcal{S} \cap \mathcal{NZ},$ then $H, V, \widetilde{H}$ and $\widetilde{V}$ are first integrals which satisfy that $\widetilde{H}^{k_1}=H^{k_4}$ and $\widetilde{V}^{k_3}=V^{k_4}.$ Moreover $H$ and $V$ are independent. 
\item [(b)] If $\mathbf{k} \in \mathcal{S} \setminus \mathcal{NZ},$ then two of the previous functions are first integrals and they are independent.
\item [(c)] If $\mathbf{k} \not \in \mathcal{S},$ then none of the previous functions is a first integral in $\mathcal{T}.$
\end{itemize}
\end{proposition}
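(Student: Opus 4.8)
The plan is to read everything off the Lie--derivative formulas already recorded in (\ref{xporhyv}), in which each of $XH,XV,X\widetilde H,X\widetilde V$ appears as the scalar $k_1k_3-k_2k_4$ (up to sign) times the corresponding monomial and one of the factors $x,y,z$ or $1-x-y-z$. Writing $w=1-x-y-z$ for brevity, the whole statement hinges on whether this scalar vanishes, so parts (a) and (c) are essentially immediate and the work concentrates in the enumeration of part (b).

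For part (a): since $\mathbf{k}\in\mathcal{S}$ we have $k_1k_3-k_2k_4=0$, so by (\ref{xporhyv}) each of the four Lie derivatives vanishes identically on $\mathcal{T}$; because $\mathbf{k}\in\mathcal{NZ}$ no exponent is zero, so none of $H,V,\widetilde H,\widetilde V$ is constant, and hence all four are first integrals. The two algebraic relations follow from comparing exponents: $\widetilde H^{k_1}=x^{k_1k_3}z^{k_1k_4}$ and $H^{k_4}=x^{k_2k_4}z^{k_1k_4}$ coincide precisely because $k_1k_3=k_2k_4$, and likewise $\widetilde V^{k_3}=y^{k_3k_4}w^{k_1k_3}$ equals $V^{k_4}=y^{k_3k_4}w^{k_2k_4}$. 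For the independence of $H$ and $V$ I would form the $2\times3$ Jacobian with rows $(H,V)$ and columns $(x,y,z)$ and evaluate the minor in the $y$- and $z$-columns; since $\partial_y H=0$ this minor equals $-\,\partial_z H\,\partial_y V=-k_1 x^{k_2}z^{k_1-1}y^{k_3-1}w^{k_2-1}(k_3w-k_2y)$, which, using $k_1\neq0$, is nonzero on the interior of $\mathcal{T}$ off the surface $k_3w=k_2y$. Thus the Jacobian has rank two on a dense open set and $H,V$ are functionally independent.

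Part (c) is then immediate from the same formulas: if $\mathbf{k}\notin\mathcal{S}$ then $k_1k_3-k_2k_4\neq0$, and on the interior of $\mathcal{T}$ the quantities $x,y,z,w$ are all positive, so each right--hand side in (\ref{xporhyv}) is nonzero there. Consequently $XH,XV,X\widetilde H,X\widetilde V$ do not vanish identically on $\mathcal{T}$, so whether or not a given function is constant it fails the defining condition $XF\equiv0$, and none of the four is a first integral.

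I expect part (b) to be the main obstacle, since it is a finite but fiddly enumeration. Here $k_1k_3-k_2k_4=0$ still makes all four Lie derivatives vanish, so every non-constant member of the list is automatically a first integral and the only issue is to exhibit two non-constant independent ones. Because $\mathbf 0\notin\mathcal S$ at least one $k_i$ is nonzero, while $\mathbf{k}\notin\mathcal{NZ}$ forces some $k_i=0$; the constraint $k_1k_3=k_2k_4$ then propagates the vanishing (for instance $k_1=0$ forces $k_2k_4=0$), so in fact at least two of the $k_i$ vanish, and the supports $\{k_1,k_3\}$ and $\{k_2,k_4\}$ are excluded as inconsistent with $\mathcal{S}$. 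Running through the surviving sign--supports (one or two nonzero parameters), in each case one of $H,\widetilde H$ (built from $x,z$) is non-constant and one of $V,\widetilde V$ (built from $y,w$) is non-constant; I would select one of each type. Their differentials are independent by the same column argument as in part (a): the $x,z$-type function has vanishing $y$-derivative while the $y,w$-type function has $y$-derivative not identically zero, so the $(y,z)$- or $(x,y)$-minor is not identically zero and the rank is two on a dense open set. This yields two independent first integrals in every admissible case, completing the proof.
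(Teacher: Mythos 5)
Your proof is correct and follows essentially the same route as the paper: all three parts are read off the Lie--derivative identities (\ref{xporhyv}), non--constancy is checked from the exponents, and independence comes from the block structure of the gradients (your explicit $2\times 2$ minors are just a concrete form of the paper's proportionality conditions), with your enumeration in part (b) being, if anything, more careful than the paper's ``without loss of generality $k_1\neq 0$''. The only quibble is in part (c): since $\mathcal{S}$ excludes the origin, $\mathbf{k}\notin\mathcal{S}$ does not quite force $k_1k_3-k_2k_4\neq 0$ (the exception being $\mathbf{k}=\mathbf{0}$), but in that degenerate case all four functions are constant and therefore fail to be first integrals by definition, so the conclusion is unaffected.
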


\begin{proof}
(a) Consider that $\mathbf{k}\in \mathcal{S} \cap \mathcal{NZ}.$ Since every coordinate of $\mathbf{k}$ is different from zero it follows that $H,V, \widetilde{H}$ and $\widetilde{V}$ are not constant in $\mathcal{T}.$ Therefore all of these functions are first integrals. It is easy to check that  $\widetilde{H}^{k_1}=H^{k_4}$ and $\widetilde{V}^{k_3}=V^{k_4}.$ Moreover since
$\nabla H(x,y,z)=x^{(k_2 -1)}z^{(k_1-1)}\left(k_2 z,0,k_1 x \right)$ and $\nabla V (x,y,z) = y^{(k_3 -1)}(1-x-y-z)^{(k_2-1)}\left( -k_2 y, k_3(1-x-y-z)-k_2y,-k_2 y \right),$ both integrals are dependent only on points satisfying $k_3(1-x-y-z)=k_2 y$ and $k_2 z =k_1 x.$ Taking into account that $k_2 \neq 0$ it follows that this set has zero Lebesgue measure. Then $H$ and $V$ are two independent first integrals.

(b) Consider now that $\mathbf{k}\in \mathcal{S} \setminus \mathcal{NZ}.$ Hence $\mathbf{k}$ has one coordinate which is different from zero. Without loss of generality we assume that $k_1 \neq 0,$ the remainder cases follows in a similar way. It is easy to check that $H$ and $\widetilde{V}$ are not constant in $\mathcal{T},$ and therefore they are first integrals.  Since $\nabla H(x,y,z)=x^{(k_2 -1)}z^{(k_1-1)}\left(k_2 z,0,k_1 x \right)$ and $\nabla \widetilde{V} (x,y,z) = y^{(k_4 -1)}(1-x-y-z)^{(k_1-1)}\left( -k_1 y, k_4(1-x-y-z)-k_1y,-k_1 y \right),$ both integrals may be dependent only on points satisfying $k_4(1-x-y-z)=k_1 y$ and $k_2 z =k_1 x.$ Therefore $H$ and $\widetilde{V}$ are independent. 

(c) The statement follows straightforward form expression (\ref{xporhyv}).
\end{proof}

\section{Behaviour at the boundary}

As we have proved in Proposition \ref{firstint} some of the functions  $H, \widetilde{H}, V$ and $\widetilde{V}$ are first integrals over the whole region $\mathcal{T}$ only when $\mathbf{k}\in \mathcal{S}.$  Nevertheless the restriction of these functions to a particular face of $\mathcal{T}$ results in a first integral even when $\mathbf{k} \not \in \mathcal{S}.$ In fact, denoting by $\widetilde{H}|_{\mathcal{Y}}$ the restriction of the function $\widetilde{H}$ to the face $\mathcal{Y},$ from  expression (\ref{xporhyv}) it follows that $X \widetilde{H}|_{\mathcal{Y}}=0.$  Therefore the level curves $\widetilde{H}|_{\mathcal{Y}}=C^{k_4}$ are invariant by the flow. Under the assumption $k_3 k_4 >0,$ these level curves define a foliation of $\mathcal{Y}$ whose leaves are given by the arcs of hyperbolas  $\left\{ z=C x^{- \frac {k_3}{k_4}}\right\}_{0<C<C^*}$ where
\begin{equation}\label{eq:C*Htilde}
C^*=\frac {k_4}{k_4+k_3} \left( \frac {k_3}{k_4+k_3}\right)^{\frac {k_3}{k_4}}.
\end{equation}
Furthermore, every leaf with $0<C<C^*$ intersects the segment $\mathcal{R}_{py}$ at exactly two points, see Figure \ref{levelsurfaces2}(a). The value $C=C^*$ leads to a unique intersection point with coordinates $x=k_3/(k_3+k_4)$ and $z=k_4/(k_3+k_4).$ Since in the face $\mathcal{Y}$ we have $y=0,$ it follows that the point corresponding to $C^*$ is the point $\mathbf{q}_{py}$ defined in (\ref{ptoslimite}).

Similarly, the restriction of $V,\widetilde{V}$ and $H$ to the faces $\mathcal{X},\mathcal{Z}$ and $\Sigma$ respectively, are first integrals even when $\mathbf{k} \not \in \mathcal{S},$ see expression (\ref{xporhyv}). Consider the changes of variables $(u,v,\alpha,\beta) \to (y,z,k_2,k_3), (y,x,-k_1,-k_4)$ or $(x,y,k_1,k_2),$ depending on the face $\mathcal{X},$  $\mathcal{Z}$ or $\Sigma$ we are looking at. Under the assumption $\alpha \beta >0,$ the level curves $V|_{\mathcal{X}}=C^{k_2}, \widetilde{V}|_{\mathcal{Z}}=C^{k_1}$ and $H|_{\Sigma}=C^{k_1}$ define a foliation on the corresponding face, whose leaves are given by the unimodal curves $\left\{v=1-u-C u^{-\frac {\beta}{\alpha}}\right\}_{0<C<C^*}$ where 
\[
C^*=\frac {\alpha}{\alpha + \beta} \left( \frac {\beta}{\alpha + \beta} \right)^{\frac {\beta}{\alpha}}. 
\]
Every leaf with $0<C<C^*$ intersects the segment $\{v=0,0<u<1\}$ at exactly two points, see Figure \ref{levelsurfaces2}(b).  The value $C=C^*$ leads to a unique intersection point $\left({\beta}/(\alpha+\beta),0\right).$ Going back through the change of variables and adding the variable which does not appear in such change, that intersection point coincides with $\mathbf{q}_{xz},$ $\mathbf{p}_{xz}$ or $\mathbf{p}_{py}$ depending on the change of variables.

\begin{figure}
\begin{center}
\includegraphics{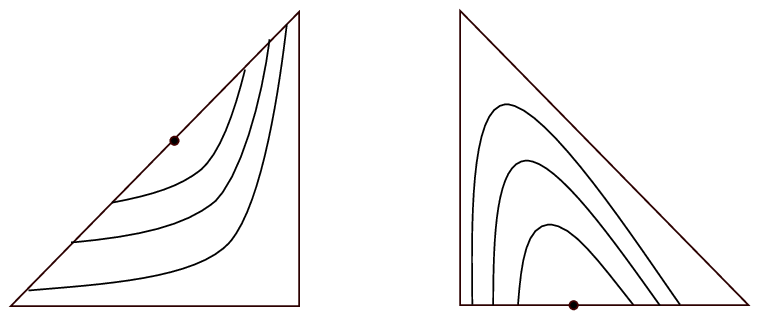}\vspace{-0.5cm}
\end{center}
\begin{picture}(0,0)(0.7,-0.1)
\put(5.6,0){(a)}
\put(4.4,0.25){$x$}
\put(7.4,3.25){$z$}
\put(3.9,0.8){$C_1$}
\put(4.5,1.35){$C_2$}
\put(4.9,1.8){$C_3$}
\put(5.4,2.3){$C^*$}
\put(10.1,0){(b)}
\put(8.6,3.25){$v$}
\put(11.6,0.25){$u$}
\put(10.1,2.75){$C_1$}
\put(10.25,2.7){\line(-1,0){0.2} \vector(-2,-1){0.4}}
\put(8.35,2.6){$C_2$}
\put(8.5,2.5){\line(1,0){0.5} \vector(1,-1){0.4}}
\put(11.1,1.75){$C_3$}
\put(11.2,1.7){\line(-1,0){0.6} \vector(-2,-1){0.6}}
\put(8.35,1.25){$C^*$}
\put(8.5,1.2){\line(1,0){1} \vector(1,-1){0.45}}
\end{picture}
\caption{\label{levelsurfaces2} (a) Foliation over the face $\mathcal{Y}$ defined by the level curves  $\widetilde{H}|_{\mathcal{Y}}=C^{k_4}$ where $0<C_1<C_2<C_3<C^*$ and $\mathbf{k}$ in $\mathcal{PS}\setminus \mathcal{S}.$ (b) Foliation over the corresponding face of the level curves  $H_{\Sigma}=C^{k_1}$ or $V_{\mathcal{X}}=C^{k_2}$ or $\widetilde{V}_{\mathcal{Z}}=C^{k_1}$ where $0<C_1<C_2<C_3<C^*$ and $\mathbf{k}$ in $\mathcal{PS}\setminus \mathcal{S}.$ Note that figure (b) is represented in $(u,v)$--coordinates.}\vspace{-0.25cm}
\end{figure}

Using the geometric information of the aforementioned foliation, in the next result we summarise the behaviour of the flow of system (\ref{3DLV}) at the boundary $\partial \mathcal{T}$ for $\mathbf{k} \in \mathcal{PS}.$

\begin{lemma}\label{lema:boundary}
\begin{itemize}
\item [(a)] If $\mathbf{k}\in \mathcal{PS},$ then each of the two 
limit sets of every orbit contained in $\mathcal{Y} \cup \Sigma$ (respectively, $\mathcal{X} \cup \mathcal{Z}$) is formed by a singular point contained in the edge $\mathcal{R}_{py}$ (respectively, in the edge $\mathcal{R}_{xz}$).
\item [(b)] If $\mathbf{k}\in \mathcal{PS} \cap \mathcal{S},$ then for every pair of orbits $\gamma_1 \subset \mathcal{Y}$ and $\gamma_2 \subset \Sigma$ (respectively, $\gamma_1 \subset \mathcal{X}$ and $\gamma_2 \subset \mathcal{Z}$) satisfying that  $\omega(\gamma_1)=\alpha(\gamma_2),$ it follows that $\alpha(\gamma_1)=\omega(\gamma_2).$ 
\item [(c)] If $\mathbf{k}\in \mathcal{PS} \setminus \mathcal{S},$ then for every pair of orbits $\gamma_1 \subset \mathcal{Y}$ and $\gamma_2 \subset \Sigma$ (respectively, $\gamma_1 \subset \mathcal{X}$ and $\gamma_2 \subset \mathcal{Z}$) satisfying that  $\omega(\gamma_1)=\alpha(\gamma_2),$ it follows that $\alpha(\gamma_1)\neq \omega(\gamma_2).$ 
\end{itemize}
\end{lemma}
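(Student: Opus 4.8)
The plan is to reduce everything to the one--dimensional foliations of the faces by the level curves of the restricted first integrals described just above, and to analyse how the leaves attach to the edges $\mathcal{R}_{py}$ and $\mathcal{R}_{xz}$. Throughout I use that $\mathbf{k}\in\mathcal{PS}\subset\mathcal{NZ}$, so by Proposition \ref{ptsing} the only singular points in the closure of each face lie on the relevant edge.

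For part (a) I would argue face by face. On $\mathcal{Y}$ the sets $\widetilde{H}|_{\mathcal{Y}}=C^{k_4}$ are invariant, so each orbit is contained in a single leaf; since $k_3k_4>0$ this leaf is a compact arc with both endpoints on $\mathcal{R}_{py}$ and no singular points in its interior. Hence the flow is monotone along the arc and its two limit sets are exactly the two singular endpoints on $\mathcal{R}_{py}$. The same reasoning applies verbatim to $\Sigma$, $\mathcal{X}$ and $\mathcal{Z}$ with $H|_{\Sigma}$, $V|_{\mathcal{X}}$ and $\widetilde{V}|_{\mathcal{Z}}$, the needed sign conditions $k_1k_2>0$, $k_2k_3>0$ and $k_1k_4>0$ all holding on $\mathcal{PS}$.

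For parts (b) and (c) the two orbits $\gamma_1\subset\mathcal{Y}$ and $\gamma_2\subset\Sigma$ both attach to $\mathcal{R}_{py}$, which I parametrise by $x\in(0,1)$ through $(x,0,1-x)$. Restricting the two integrals to this edge gives the unimodal functions $g_{\mathcal{Y}}(x)=x^{k_3}(1-x)^{k_4}$ and $g_{\Sigma}(x)=x^{k_2}(1-x)^{k_1}$, and by (a) the endpoints of $\gamma_1$ are the two solutions of $g_{\mathcal{Y}}=\mathrm{const}$ and those of $\gamma_2$ the two solutions of $g_{\Sigma}=\mathrm{const}$. The loop property thus becomes the question of whether the two--point pairing induced on $\mathcal{R}_{py}$ by $g_{\mathcal{Y}}$ agrees with the one induced by $g_{\Sigma}$. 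When $\mathbf{k}\in\mathcal{S}$ (case (b)) I would write $k_3/k_4=k_2/k_1=:r$ and note that both integrals are powers of the single function $h(x)=x^{r}(1-x)$, namely $g_{\mathcal{Y}}=h^{k_4}$ and $g_{\Sigma}=h^{k_1}$; since $k_1k_4\neq0$ the two pairings coincide, so $\gamma_1$ and $\gamma_2$ have the same pair of endpoints and $\omega(\gamma_1)=\alpha(\gamma_2)$ forces $\alpha(\gamma_1)=\omega(\gamma_2)$. When $\mathbf{k}\notin\mathcal{S}$ (case (c)) I would suppose the pairings shared a common pair $x_1\neq x_2$; taking logarithms of $g_{\mathcal{Y}}(x_1)=g_{\mathcal{Y}}(x_2)$ and $g_{\Sigma}(x_1)=g_{\Sigma}(x_2)$ and setting $a=\ln(x_1/x_2)$, $b=\ln((1-x_1)/(1-x_2))$ produces the homogeneous system $k_3a+k_4b=0$, $k_2a+k_1b=0$ with determinant $k_1k_3-k_2k_4\neq0$, whence $a=b=0$ and $x_1=x_2$, a contradiction; so only one endpoint can coincide and $\omega(\gamma_1)=\alpha(\gamma_2)$ gives $\alpha(\gamma_1)\neq\omega(\gamma_2)$. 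The statements for $\mathcal{X}\cup\mathcal{Z}$ follow identically, now with $g_{\mathcal{X}}(y)=y^{k_3}(1-y)^{k_2}$ and $g_{\mathcal{Z}}(y)=y^{k_4}(1-y)^{k_1}$ on $\mathcal{R}_{xz}$, whose comparison yields the same determinant $k_1k_3-k_2k_4$.

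I expect the main obstacle to be the bookkeeping rather than any single estimate: I must check that each non--extremal point of the edge is the endpoint of a unique leaf in each foliation, so that the pairing maps are genuinely well defined, and that the monotonicity from (a) assigns $\alpha$ and $\omega$ consistently to the two endpoints, so that matching one pair of limit sets really pins down the other. These are exactly the points where the distinction between (b) and (c) --- a single coincident maximum $\mathbf{p}_{py}=\mathbf{q}_{py}$ versus two distinct maxima $\mathbf{p}_{py}\neq\mathbf{q}_{py}$ --- enters, and care is needed near those extremal points.
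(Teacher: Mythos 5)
Your proposal is correct and follows essentially the same route as the paper: part (a) by confining each orbit to a leaf of the restricted first-integral foliation whose two endpoints lie on the relevant edge, and parts (b)--(c) by comparing the two-point pairings that the foliations of the adjacent faces induce on that edge. The only cosmetic difference is that you decide whether the pairings agree via the $2\times 2$ log-linear system with determinant $k_1k_3-k_2k_4$ (and the observation that both restrictions are powers of $h(x)=x^{r}(1-x)$ when $\mathbf{k}\in\mathcal{S}$), whereas the paper argues that two hyperbolas $z=C_1x^{-k_3/k_4}$ and $z=C_2x^{-k_2/k_1}$ through a common point either coincide or meet only there; these are equivalent.
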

\begin{proof}
We restrict ourselves to consider orbits in the faces $\mathcal{Y} \cup \Sigma.$ The study of the orbits in the faces $\mathcal{X} \cup \mathcal{Z}$ follows in a similar way.

(a) Suppose that $\mathbf{k}\in \mathcal{PS}.$ Hence $k_3k_4>0.$ Therefore every orbit $\gamma_1$ in $\mathcal{Y}$ is contained in a leaf of the foliation $z=C x^{-\frac {k_3}{k_4}}$ with $0<C<C^*,$ which is an arc of a hyperbola intersecting the edge $\mathcal{R}_{py}$ at exactly two points. Since there are not other singular points in $\mathcal{Y},$ see Proposition \ref{ptsing}(a), we conclude that each of the two limit sets of $\gamma_1$ is one of these intersection points. 

On the other hand we have $k_2k_1>0.$ Therefore every orbit $\gamma_2$ in $\Sigma$ is contained in a leaf of the foliation $y=1-x-C x^{-\frac {k_2}{k_1}},$ which is an unimodal curve intersecting the edge $\mathcal{R}_{py}$ at exactly two points. We conclude again that each of the limit sets of $\gamma_2$ is one of these intersection points. 

(b, c) Taking into account that $\Sigma$ is given by the relation $z=1-x-y,$ we express the leaves in $\Sigma$ as a function $z(x)$ in the following way $z=C x^{-\frac{k_2}{k_1}}.$ 

Let $\mathbf{p}=(x_0,0,1-x_0)$ be a point in the edge ${R}_{py}.$ There exist two positive values $C_1$ and $C_2$ such that both the leaf $z=C_1  x^{-\frac {k_3}{k_4}}$ in the face $\mathcal{Y}$ and the leaf $z=C_2  x^{-\frac {k_2}{k_1}}$ in the face $\Sigma$ 
contain the point $\mathbf{p}.$ On the other hand the leaf in the face $\mathcal{Y}$ intersects $\mathcal{R}_{py}$ at a new point $(x_1,0,1-x_1)$ and the leaf in the face $\Sigma$ intersects $\mathcal{R}_{py}$ at a new point $(x_2,0,1-x_2).$ Since two hyperbolas either intersect at most at one point or they coincide, we conclude that $k_3k_1=k_4k_2$ if and only if $x_1=x_2.$ 
\end{proof}

\section{Behaviour in the interior}

In this last section we deal with the proof of the main theorems of the paper. The next result is a technical lemma which describes planar flows under integrability conditions.

\begin{lemma}\label{integrableflows}
Let $\dot{\mathbf{x}}=\mathbf{f(x)}$ be a planar system of differential equations and let $U$ be a flow--invariant region in $\mathbb{R}^2.$ Assume that both the boundary of $U$ is formed by a heteroclinic loop and there exists exactly one singular point $\mathbf{p}$ contained in the interior of $U.$ If there exists a first integral $H$  defined over $U$ which is non--constant over open sets, then every orbit in the interior of $U$ but the singular point is a periodic orbit.
\end{lemma}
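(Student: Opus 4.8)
The plan is to combine the Poincar\'e--Bendixson theorem with two rigidity properties: one coming from the existence of $H$, and one coming from the Poincar\'e index of the invariant boundary. First I would record the topological set-up. Since $\partial U$ is a heteroclinic loop it is a simple closed invariant curve, so $U$ is the bounded component it encloses and $\overline{U}$ is compact. Because $\partial U$ consists of orbits and singular points it is invariant; hence by uniqueness of solutions any orbit issued from a point $\mathbf{q}$ in the interior of $U$ stays in the interior for all time and is bounded. Fixing such a $\mathbf{q}\neq\mathbf{p}$, the orbit $\gamma_{\mathbf{q}}$ is a regular (non-constant) orbit, and the Poincar\'e--Bendixson theorem tells us that each of $\omega(\gamma_{\mathbf{q}})$ and $\alpha(\gamma_{\mathbf{q}})$ is a singular point, a periodic orbit, or a polycycle formed by singular points and connecting orbits. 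Since $H$ is constant along $\gamma_{\mathbf{q}}$, say $H\equiv c$, it takes the value $c$ on both limit sets as well.

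The engine of the proof is the following transversal argument, which I would state once and reuse. Suppose a non-closed orbit accumulates on a periodic orbit or on a polycycle $\Lambda$. Choose a transversal segment $\Sigma$ through a regular (non-singular) point of $\Lambda$. The first return (or transition) map along the flow preserves $H$, since points related by the flow lie on the same orbit; moreover the crossings of the accumulating orbit with $\Sigma$ form a monotone sequence converging to the intersection with $\Lambda$, all carrying the value $c$. Monotonicity of the return map then forces $H\equiv c$ on a whole subsegment of $\Sigma$, and saturating this subsegment by the flow produces a two-dimensional open set on which $H\equiv c$, contradicting that $H$ is non-constant over open sets. This rules out every periodic orbit and every polycycle as the limit set of a non-closed orbit; in particular there are no limit cycles, and whenever $\omega(\gamma_{\mathbf{q}})$ or $\alpha(\gamma_{\mathbf{q}})$ is a periodic orbit, $\gamma_{\mathbf{q}}$ must coincide with it.

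It remains to exclude the possibility that a non-periodic interior orbit converges to a singular point, and this is the step I expect to be the main obstacle: Poincar\'e--Bendixson allows it, and, unlike spiralling, the transversal argument does not apply directly (note that a saddle already admits the first integral $xy$, so the mere existence of $H$ does not forbid separatrices). I would resolve it with the Poincar\'e index. The invariant curve $\partial U$ has index $+1$, so the sum of the indices of the singular points it encloses equals $+1$; as $\mathbf{p}$ is the only interior singular point, $\mathrm{ind}(\mathbf{p})=+1$, which already excludes a saddle. Next, $H$ forbids $\mathbf{p}$ from being an attractor or a repeller: if every nearby orbit converged to $\mathbf{p}$, then by continuity and the invariance of $H$ along orbits $H$ would equal $H(\mathbf{p})$ on a whole neighbourhood of $\mathbf{p}$, again contradicting non-constancy. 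Analysing the sectors at $\mathbf{p}$, an elliptic sector would contain a homoclinic loop bounding a disk whose enclosed index is $+1$, requiring an interior singular point other than $\mathbf{p}$, which does not exist; so there are no elliptic sectors, and since $\mathrm{ind}(\mathbf{p})=1+\tfrac12(e-h)$ this forces $h=e=0$. With neither hyperbolic nor elliptic sectors and $\mathbf{p}$ neither attracting nor repelling, $\mathbf{p}$ is a center, so no other orbit can have $\mathbf{p}$ in its limit set.

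Finally I would assemble the pieces. Around the center $\mathbf{p}$ there is a period annulus of closed orbits; take it maximal. Its outer boundary is an invariant set arising as a limit of periodic orbits, hence either a periodic orbit or a polycycle. A polycycle would require an interior singular point other than $\mathbf{p}$ (none exists), and a periodic orbit bounding the annulus would be accumulated by orbits on its far side, which by the transversal argument is impossible unless those orbits are themselves periodic, contradicting maximality. Therefore the period annulus reaches $\partial U$ and fills all of $\mathrm{int}(U)\setminus\{\mathbf{p}\}$. Equivalently, for every $\mathbf{q}\neq\mathbf{p}$ the limit sets of $\gamma_{\mathbf{q}}$ are neither singular points, nor polycycles, nor periodic orbits distinct from $\gamma_{\mathbf{q}}$, so $\gamma_{\mathbf{q}}$ is itself a periodic orbit, which proves the lemma.
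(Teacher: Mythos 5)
Your strategy is genuinely different from the paper's. The paper argues in the opposite order: it first claims that no limit set of an interior orbit can lie in $\partial U\cup\{\mathbf{p}\}$ (because the orbit together with such limit sets would cut off an open region on which $H$ is forced to be constant), and only then invokes Poincar\'e--Bendixson to conclude that both limit sets are periodic orbits, which the non-constancy of $H$ on open sets finally identifies with the orbit itself. Your transversal argument and your index/sector analysis of $\mathbf{p}$ are sound (the latter is in fact more careful than anything in the paper about why $\mathbf{p}$ must be a center, though the sector decomposition and the formula $\mathrm{ind}=1+\tfrac12(e-h)$ quietly require more regularity than the lemma's bare hypotheses provide). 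However, there is a genuine gap exactly where the two routes diverge: you never exclude the possibility that a non-closed interior orbit has as $\omega$-- or $\alpha$--limit set a \emph{single singular point lying on $\partial U$}, i.e.\ a corner of the heteroclinic loop. Your index argument applies only to $\mathbf{p}$; your transversal argument applies only to limit sets (periodic orbits or polycycles) around which the orbit spirals and hence traps an open region; and in your final assembly the assertion that a polycycle bounding the maximal period annulus ``would require an interior singular point other than $\mathbf{p}$'' is false as stated, since such a polycycle can be anchored at the singular corners of $\partial U$ --- indeed $\partial U$ itself is such a polycycle, and that is precisely the configuration that occurs. So the concluding step ``the limit sets are neither singular points, nor polycycles, nor periodic orbits distinct from $\gamma_{\mathbf{q}}$'' rests on a case you have not treated.

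The missing case can be closed in the spirit of the paper's cut-off argument. Suppose $\gamma\subset\mathring{U}\setminus\{\mathbf{p}\}$ is non-closed; by your own results its two limit sets can only be singular points other than $\mathbf{p}$, hence corners of $\partial U$. Then $\overline{\gamma}$ is an arc (or loop) with endpoints on $\partial U$, and it separates $\mathring{U}$ into two Jordan subdomains, one of which, say $R$, does not contain $\mathbf{p}$. The region $R$ is invariant, simply connected and free of singular points, so it contains no periodic orbit; by Poincar\'e--Bendixson and your transversal argument every orbit in $R$ has both limit sets among the finitely many corners of $\partial U$ lying on $\partial R$. Consequently $H$, being constant on orbits and continuous, takes only finitely many values on the connected open set $R$ and is therefore constant there, contradicting the hypothesis that $H$ is non-constant on open sets. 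With this addition your proof is complete.
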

\begin{proof}
Let $\mathring{U}$ denote the interior of the region $U.$ It is easy to check that any orbit $\gamma$ in $\mathring{U}\setminus\{\mathbf{p}\}$ has its limit sets contained in the boundary of $\mathring{U}\setminus\{\mathbf{p}\}.$ Otherwise first integral $H$ would be constant over one of the open regions limited by $\gamma$ or over the whole open region $\mathring{U}\setminus\{\mathbf{p}\}.$ Hence there are not homoclinic orbits to the singular point $\mathbf{p}.$ From the Poincar\'e--Bendixson Theorem, at least one limit set of $\gamma$ is a periodic orbit $\Gamma_1$ surrounding $\mathbf{p}$ and contained in $\mathring{U}\setminus\{\mathbf{p}\}.$ 

Applying now similar arguments to the other limit set, it follows that this limit set is also a periodic orbit $\Gamma_2$ contained in $\mathring{U}\setminus\{\mathbf{p}\}$ and surrounding $\mathbf{p}.$ Since $H$ is not constant over open sets we conclude that $\Gamma_1$ and $\Gamma_2$ are the same periodic orbit and this periodic orbit coincides with $\gamma.$ Therefore every orbit in $\mathring{U}\setminus\{\mathbf{p}\}$ is a periodic orbit.
\end{proof}

\textsc{Proof of Theorem \ref{A}:} 
(a) Under the assumption $\mathbf{k} \in \mathcal{PS} \cap \mathcal{S},$ system (\ref{3DLV}) is integrable and the functions $H$ and $V$ are two independent first integrals, see Proposition \ref{firstint}(a). Since any level surface $\mathcal{H}_C$  is invariant by the flow, we can consider the restriction of the flow to each of these  surfaces. Of course this restricted flow is also integrable because the restriction of the function $V$ to $\mathcal{H}_C$ is a first integral. On the other hand there exists exactly one singular point in  the interior of $\mathcal{H}_C,$ which comes from the intersection of the manifold $\mathcal{H}_C$ and the segment $R$ defined in the Proposition \ref{ptsing}(a-1). 

The map $\pi(x,y,z)=(x,y)$ projects the manifold $\mathcal{H}_C$ over a compact region $U.$ Moreover the Jacobian matrix of $\pi$ at the point $\pi^{-1}(x,y)$ defines a flow over $U$ which is differentially conjugate to the restricted flow over  $\mathcal{H}_C.$ Since there exists exactly one singular point in the interior of $U,$ the function $V\circ\pi^{-1}$ is a first integral over $U$ which is non--constant over open sets and the boundary of $U$ is formed by a heteroclinic loop (see Lemma \ref{lema:boundary}(b)), from Lemma \ref{integrableflows} it follows that every orbit in $U$ but the singular point is a periodic orbit. Therefore, every orbit over $\mathcal{H}_C$ but the singular point is a periodic orbit. This result is independent on the level surface we are working at, hence  every orbit in the interior of the region $\mathcal{T},$ but the singular points, is a periodic orbit. 

The behaviour of the flow at the boundary of $\mathcal{T}$ when $\mathbf{k} \in \mathcal{PS} \cap \mathcal{S}$ can be obtained from Lemma \ref{lema:boundary}(b).

(b) Consider now that $\mathbf{k} \not \in \mathcal{PS} \cap \mathcal{S}$ and $\mathbf{k} \neq \mathbf{0}.$ We distinguish between two situations: first we suppose that $\mathbf{k} \in \mathcal{S}\setminus \mathcal{PS}.$ In such case $\mathbf{k}$ belongs to the manifold $\mathcal{S}.$ From Proposition \ref{firstint} it follows that at least one of the functions $H,V,\widetilde{H}$ or $\widetilde{V}$ is a first integral. Without loss of generality we can assume that $H$ is a first integral. Hence any level surface $\mathcal{H}_C$ is invariant by the flow and we can consider the restriction of the flow to $\mathcal{H}_C.$ From Proposition \ref{ptsing} there are not singular points in the interior of $\mathcal{H}_C.$  Applying the Poincar\'e--Bendixson Theorem to the flow in the level surface $\mathcal{H}_C$, we conclude that the flow goes from the boundary of $\mathcal{H}_C$ to the boundary of $\mathcal{H}_C.$ Since these arguments are independent on the level surface, it follows that the limit sets of every orbit in the interior of $\mathcal{T}$ is contained in $\partial \mathcal{T}.$

Suppose now that $\mathbf{k} \not \in \mathcal{S}$ and $\mathbf{k} \neq \mathbf{0}.$ Since one of the coordinates of $\mathbf{k}$ is different from zero, the level surfaces of at least one of the functions $H,V,\widetilde{H}$ and $\widetilde{V}$ can be expressed as the graph of an explicit differentiable function. For instance if $k_4\neq 0,$ then $\widetilde{\mathcal{H}}_{C^{k_4}}$ is the graph of the function $z=C x^{-\frac {k_3}{k_4}}$ defined over the face $\mathcal{Z}.$ 
Each of these level surfaces split the interior of $\mathcal{T}$ into two disjoint connected components. On the other hand since  $\mathbf{k} \not \in \mathcal{S}$ these level surfaces are not invariant by the flow, see Proposition \ref{firstint}(c). In fact the flow is transversal to them and the direction of the flow through them depends on $\mathbf{k} \in \mathcal{S}^+$ or $\mathbf{k} \in \mathcal{S}^-,$ see expression (\ref{xporhyv}). 
Since as $C$ tends to $0$ or to $C^*$ the level surfaces $\widetilde{\mathcal{H}}_{C^{k_4}}$ tend to the boundary of $\mathcal{T},$ we conclude that the flow in the interior of $\mathcal{T}$ goes from one part of the boundary to another part of the boundary. That is the limit sets of every orbit in the interior of $\mathcal{T}$ are contained in $\partial \mathcal{T}.$ 

Thus, in both cases the limit sets of every orbit in the interior of $\mathcal{T}$ are contained in $\partial \mathcal{T}.$ Since there are no isolated singular points in $\partial \mathcal{T},$ see Proposition \ref{ptsing}, we conclude that these limit sets are not periodic orbits. $\hfill \blacksquare$\newline

Note that in the previous proof we have only used that trajectories cross the level surfaces of some of the functions $H,V,\widetilde{H}$ or $\widetilde{V},$ always in the same direction. This suffices to conclude that the limit sets of the orbits in $\mathcal{T}$ are contained in the boundary. To prove Theorem \ref{B} we need to be more precise in the location of these limit sets. To reach this goal we will control the geometry of the level surfaces.\newline

\begin{figure}
\begin{center}
\includegraphics{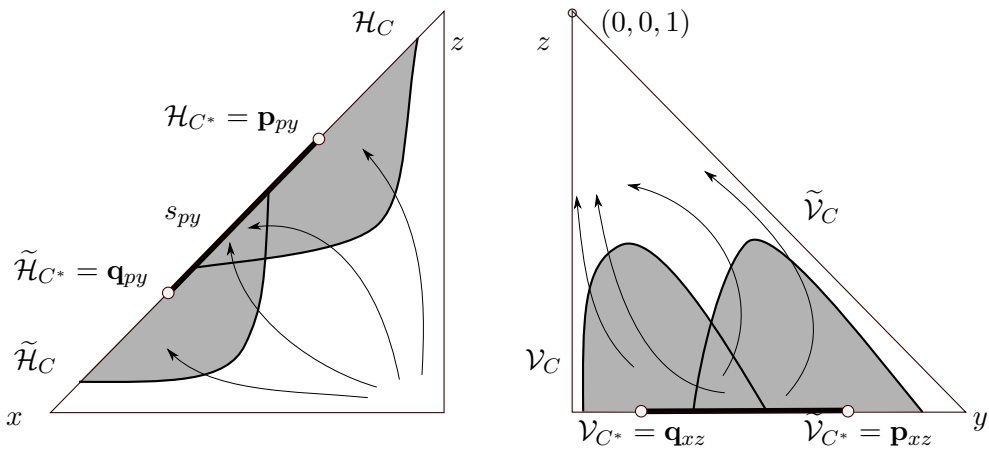}
\caption{\label{theo2}Representation when $\mathbf{k} \in \mathcal{PS}_+ \cap \mathcal{S}^+$
of: the positive invariant regions (in grey) limited by the level surfaces $\mathcal{H}_C$ and $\widetilde{\mathcal{H}}_C;$ the negative invariant regions (in grey) limited by the level surfaces $\mathcal{V}_C$ and $\widetilde{\mathcal{V}}_C;$ the segment $s_{py}$ formed by the $\omega$--limit set of each orbit in the interior of $\mathcal{T};$ and the segment $s_{xz}$ formed by the $\alpha$--limit set of each orbit in the interior of $\mathcal{T}.$}
\begin{picture}(0,0)(0.075,-0.5)
\put(-4.5,5){$s_{py}$}
\put(-4.5,6.25){$\mathcal{H}_{C^*}=\mathbf{p}_{py}$}
\put(-6.5,4.2){$\widetilde{\mathcal{H}}_{C^*}=\mathbf{q}_{py}$}
\put(-6.6,2.25){$x$}
\put(-0.7,7,25){$z$}
\put(-6.5,3){$\widetilde{\mathcal{H}}_C$}
\put(-2,7.5){${\mathcal{H}}_C$}
\put(0.45,7.25){$z$}
\put(6.25,2.25){$y$}
\put(0.3,3){$\mathcal{V}_C$}
\put(4,5){$\widetilde{\mathcal{V}}_C$}
\put(1,2.05){$\mathcal{V}_{C^*}=\mathbf{q}_{xz}$}
\put(4,2.05){$\widetilde{\mathcal{V}}_{C^*}=\mathbf{p}_{xz}$}
\put(0.93,7.74){\circle{0.12}}
\put(1.3,7.5){$(0,0,1)$}
\end{picture}\vspace{-1.5cm}
\end{center}
\end{figure}

\textsc{Proof of Theorem \ref{B}:}
(a) Suppose that $\mathbf{k} \in \mathcal{PS}_+ \cap \mathcal{S}^+.$  Since $\mathbf{k} \not \in \mathcal{S}$ the functions $H$  and $\widetilde{H}$ are not first integrals and each level surface $\mathcal{H}_C$ and $\widetilde{\mathcal{H}}_C$ splits the region $\mathcal{T}$ into two disjoint regions in such a way that the flow goes from one to the other. In fact since $k_3k_4>0$ the intersection of $\widetilde{\mathcal{H}}_C$ with any plane $\{y=y_0\},$ where $0<y_0<1,$ is an arc of a hyperbola in the $(x,z)$--plane, see Figure \ref{theo2}. Similarly, since $k_1k_2>0$ the intersection of $\mathcal{H}_C$ with any plane $\{y=y_0\},$ where $0<y_0<1,$ is an arc of a hyperbola in the $(x,z)$--plane. The flow through $\mathcal{H}_C$ and through $\widetilde{\mathcal{H}}_C$ has the same orientation as the vectors $\nabla H$ and $\nabla  \widetilde{H}$ respectively, see expression (\ref{xporhyv}). Since $k_i>0$ the coordinates of the gradient $\nabla H$ are non--negatives. Hence $\nabla H$ is oriented towards the region containing the point $\mathbf{p}_{py},$ see the shadowed region in Figure \ref{theo2}. In a similar way, the gradient $\nabla \widetilde{H}$ is oriented towards the region containing the point $\mathbf{q}_{py},$ see Figure \ref{theo2}. Therefore the flow evolves from the region containing the origin to the region containing the segment $s_{py}$, see Figure \ref{theo2}. On the other hand points in $\mathcal{R}_{py}\setminus s_{py}$ are not limit set of orbits in the interior of $\mathcal{T},$ see Proposition \ref{prop:ptssing}. We conclude that the $\omega$--limit set of any given orbit in the interior of $\mathcal{T}$ is a singular point contained in the segment $s_{py}.$

Since $\mathbf{k} \not \in \mathcal{S}$ the functions $V$  and $\widetilde{V}$ are not first integrals. Moreover each level surface $\mathcal{V}_C$ and $\widetilde{\mathcal{V}}_C$ splits the region $\mathcal{T}$ into two disjoint regions in such a way that the flow goes from one to the other. The flow through these surfaces has opposite direction to that of the gradients $\nabla V$ and $\nabla  \widetilde{V},$ see expression (\ref{xporhyv}). Since $k_i>0$ the gradient $\nabla V$ is oriented towards the region containing the point $\mathbf{q}_{xz}$ and the gradient $\nabla \widetilde{V}$ is oriented towards the region containing the point $\mathbf{p}_{xz}.$ Therefore the flow in the interior of $\mathcal{T}$ evolves from the shadowed region in Figure \ref{theo2} towards the region containing the point $(0,0,1).$ Since points in $\mathcal{R}_{xz}\setminus s_{xz}$ are not limit set of orbits in the interior of $\mathcal{T},$ see Proposition \ref{prop:ptssing}, we conclude that the $\alpha$--limit set of any given orbit in the interior of $\mathcal{T}$ is a singular point contained in the segment $s_{xz},$  see Figure \ref{theo2}.

As we have just proved when $\mathbf{k} \in \mathcal{PS}_+ \cap \mathcal{S}^+$ the $\omega$--limit set and the $\alpha$--limit set of any given orbit in the interior of $\mathcal{T}$ is contained in the segments $s_{py}$ and $s_{xz},$ respectively. Similar arguments apply when $\mathbf{k} \in \mathcal{PS}_- \cap \mathcal{S}^-.$ 

Note that a change of the sign of the parameter $\mathbf{k}$ is equivalent to a change in the sign of time in the system of differential equations (\ref{3DLV}). Therefore the behaviour of the flow in cases $\mathbf{k} \in \mathcal{PS}_+ \cap \mathcal{S}^-$ and $\mathbf{k} \in \mathcal{PS}_- \cap \mathcal{S}^+$ follows from the cases described above by changing the orientation of the orbits.

(b,c) The behaviour of the flow at the boundary can be obtained from Lemma \ref{lema:boundary}(c).
$\hfill\blacksquare$

\section*{Acknowledgements}
AET is partially supported by MEC grant number MTM2005-06098-C02-1,
by UIB grant number UIB2005/6 and by CAIB grant number CEH-064864. ACM acknowledges support from the BioSim Network, grant number LSHB-CT-2004-005137. We thank the referees for their careful reading of our manuscript and their useful comments on the presentation of this article.




\begin{thebibliography}{11}

\bibitem[1]{AKJ08}{\sc S. Alizon, M. Kucera, V. A. A. Jansen}, {\it Competition between cryptic species explains variations in rates of lineage evolution}, Proc. Natl. Acad. Sci. U.S., \textbf{105}, (2008). 12382--12386.

\bibitem[2]{MZ05}{\sc M. Bobienski, H. Zoladek}, {\it The three-dimensional generalized Lotka--Volterra systems}, Ergod. Th. \& Dynam. Sys., \textbf{25}, (2005). 759--791.

\bibitem[3]{C00}{\sc L. Cair\'o}, {\it Darboux First Integral Conditions and integrability of the 3D Lotka--Volterra System}, J. Nonlinear Math. Phycs., \textbf{7}, (2000), 511-531.

\bibitem[4]{CL00}{\sc L. Cairo and J. Llibre}, {\it Darboux integrability for 3D Lotka--Volterra systems}, J. Phys. A: Math. Gen. \textbf{33}, (2000), 2395--2406.

\bibitem[5]{CLS97}{\sc J. Chavarriga, J. Llibre and J. Sotomayor}, {\it Algebraic solutions for polynomial vector fields with emphasis in the quadratic case}, Expositions Math., \textbf{15}, (1999), 161.

\bibitem[6]{C94}{\sc C. J. Christopher}, {\it Invariant algebraic curves and conditions for  a center}, Proc. R. Soc. Edin. A, \textbf{124}, (1994). 1209.

\bibitem[7]{CL99}{\sc C. J. Christopher and J. Llibre}, {\it Algebraic aspects of integrability for polynomial systems}, Qualit. Theory Dynam. Syst., \textbf{1}, (1999). 71--95.

\bibitem[8]{CDL04}{\sc F. Coppex, M. Droz, A. Lipowski}, {\it Extinction dynamics of Lotka-Volterra ecosystems on evolving networks}, Phys. Rev. E, \textbf{69}, (2004). 061901.

\bibitem[9]{DPW88}{\sc Di Cera, P. E. Phillipson and J. Wyman}, {\it Chemical oscillations in closed macromolecular systems}, Proc. Natl. Acad. Sci. U.S., \textbf{85}, (1988). 5923--5926.

\bibitem[10]{H88}{\sc M. W. Hirsch}, {\it Systems of differential equations that are competitive or cooperative. III: Competing species}, Nonlinearity, \textbf{1} (1998), 51--71.

\bibitem[11]{HS02}{\sc J. Hofbauer and K. Sigmund}, {\it Evolutionary games and population dynamics}, Cambridge University Press, Cambridge, 1998.


\bibitem[12]{LR00}{\sc J. Llibre and G. Rodr\'{\i}guez}, {\it Invariant hyperplanes and Darboux integrability for d-dimensional polynomial differential systems}, Bull. Sci. Math., \textbf{124}, (2000), 599--619. 

\bibitem[13]{L20}{\sc A. J. Lotka}, {\it Analytical Note on Certain Rhythmin Relations in Organic Systems}, Proc. Natl. Acad. Sci. U.S., \textbf{6}, (1920). 410--415.

\bibitem[14]{MH75}{\sc R. W. McCarley, J. A. Hobson}, {\it Neuronal excitability modulation over the sleep cycle: a structural and mathematical model}, Science, \textbf{189}, (1975). 58--60.

\bibitem[15]{MOD20}{\sc A. Murza,I. Oprea and G. Dangelmayr}, {\it Chemical oscillations in a closed sequence of protein folding equilibria}, Libertas Math., \textbf{27}, (2007). 125--130.


\bibitem[16]{DZ98}{\sc P. Van Den Driessche and M. L. Zeeman}, {\it Three--dimensional competitive Lotka--Volterra systems with no periodic orbits}, Siam J. Appl. Math., \textbf{58}, (1998), 227-234.


\bibitem[17]{V31}{\sc V. Volterra}, {\it Le\c{c}ons sur la Th\'eorie Math\'ematique de la Lutte pour la vie}, Gautiers Villars, Paris, 1931.

\bibitem[18]{W75}{\sc J. Wyman}, {\it The turning wheel: A study in steady states}, Proc. Nat. Acad. Sci. USA., \textbf{72} (1975), 3983--3987.

\bibitem[19]{Z93}{\sc M. L. Zeeman}, {\it Hopf bifurcations in competitive three dimensional Lotka--Volterra systems}, Dynam. Stability Systems, \textbf{8}, (1993), 189-217.

\bibitem[20]{Z96}{\sc M. L. Zeeman}, {\it On directed periodic orbits in three--dimensional competitive Lotka--Volterra systems}, in Diff. Equ. and Appl. Bio. and Ind., World Scientific, River Edge, NJ, 1996, 563--572.
\end{thebibliography}
\end{document}